\documentclass[10pt]{article}

\usepackage{amsmath,amssymb,mathtools,amsthm,textcomp,float,mathscinet
}
\usepackage{amsfonts,graphicx,enumerate,listings,csquotes}
\usepackage{color,tikz}
\usepackage[mathscr]{eucal}
\theoremstyle{definition}
\usepackage[american]{babel}
\interdisplaylinepenalty=0

\newtheorem{theorem}{\bf Theorem}[section]
\newtheorem{remark}{\bf Remark}[section]

\newtheorem{lemma}{Lemma}[section]
\newtheorem{corollary}{Corollary}[section]

\newtheorem{definition}{Definition}[section]
\newtheoremstyle
    {remarkstyle}
    {}
    {11pt}
    {}
    {}
    {\bfseries}
    {:}
    {     }
    {\thmname{#1} \thmnumber{#2} }

\theoremstyle{remarkstyle}

\begin{document}

\title{Space-fractional versions of the negative binomial and Polya-type
processes}
\author{L. Beghin \thanks{
Address: Department of Statistical Sciences, Sapienza University of Rome,
P.le A. Moro 5, I-00185 Roma, Italy. e-mail: \texttt{luisa.beghin@uniroma1.it}} \and P. Vellaisamy \thanks{%
Address: Indian Institute of Technology Bombay, Powai, Mumbai, India-400076.
e-mail: \texttt{pv@math.iitb.ac.in}}}

\date{}
\maketitle

\begin{abstract}
In this paper, we introduce a space fractional negative binomial (SFNB)
process by subordinating the space fractional Poisson process to a gamma
subordinator. Its one-dimensional distributions are derived in terms of
generalized Wright functions and their governing equations are obtained. It
is a L\'{e}vy process and the corresponding L\'{e}vy measure is given.
Extensions to the case of distributed order SFNB process, where the
fractional index follows a two-point distribution, is analyzed in detail.
The connections of the SFNB process to a space fractional Polya-type process
is also pointed out. Moreover, we define and study a multivariate version of
the SFNB obtained by subordinating a $d$-dimensional space-fractional
Poisson process by a common independent gamma subordinator.
Some applications of the SFNB process to the
studies of population's growth and epidemiology are pointed out.
Finally, we discuss an algorithm for the simulation of the SFNB process.
\end{abstract}

\vspace*{.3cm} \noindent \emph{AMS Subject Classification (2010):} Primary:
60G22, Secondary: 60G51, 60E05

\vspace*{.3cm} \noindent \emph{Keywords:} Fractional negative binomial
process; stable subordinator; Wright function; Polya-type process; governing
equations.

\section{Introduction}

Recently, the fractional versions of the Poisson and the negative binomial
processes have received a lot of attention of the researchers. These
fractional processes have been found more useful, than the classical ones,
for modelling several phenomena that occur in various disciplines. One of
the aims of this paper is to introduce a new fractional version of the
well-known negative binomial (NB) process. The latter is widely applied in
many different fields, mainly for its property of overdispersion; see \cite%
{KOZ}, \cite{UV} and \cite{VAL} (where the more general Tweedie-Poisson
model is studied). A first fractional version of the NB process has been
introduced by Vellaisamy and Maheshwari in \cite{VEL} and we refer to it as
a time-fractional variant, since it is defined by means of the so-called
\textquotedblleft time-fractional Poisson process". Other and different
fractional versions have been defined in \cite{BEG3} and \cite{BEGMA}, in
terms of the fractional gamma process; see \cite{VEL} for the differences
among these approaches.

\vspace*{0.3cm} \noindent Henceforth, $\mathbb{Z_{+}}=\{0,1,\ldots \}$
denotes the set of nonnegative integers. Let
\begin{equation*}
\binom{\alpha }{n}:=\left\{
\begin{array}{l}
\frac{\Gamma (\alpha +1)}{\Gamma (\alpha -n+1)n!},\qquad \alpha -n+1\notin
\mathbb{Z}_{-} \\
0,\qquad \alpha -n+1\in \mathbb{Z}_{-}%
\end{array}%
\right. ,
\end{equation*}
for any $\alpha \in \mathbb{R},$ $n\in \mathbb{Z_{+}}$ and let $X$
be a negative binomial r.v. with parameters $\gamma >0$ and $\eta
\in (0,1)$, with
distribution%
\begin{equation}
\mathbb{P}(X=n)=\binom{n+\gamma -1}{n}\eta ^{n}(1-\eta )^{\gamma },\quad
n\in \mathbb{Z}_{+},  \label{nb}
\end{equation}%
which we denote by $NB(\gamma ,\eta ).$ When $\gamma =k$, a positive
integer, $X$ denotes the number of successes before the $k$-th failure in a
sequence of Bernoulli trials with success probability $\eta $.

\vspace*{0.3cm} \noindent Let $\{N(t,\lambda )\}_{t\geq 0}$ be a homogeneous
Poisson process with rate $\lambda >0$, and $\{Y(t)\}_{t\geq 0}$ be an
independent gamma subordinator, where $Y(t)\sim G(\alpha ,pt)$, the gamma
distribution with scale parameter $\alpha ^{-1}$ and shape parameter $pt>0$
(see eq. (\ref{gdensity}) below). Then the process $\{Q(t,\lambda )\}_{t\geq
0}=\{N(Y(t),\lambda )\}_{t\geq 0}$ is called a negative binomial process and
$Q(t,\lambda )\sim NB(pt,\eta )$, for $t>0$, where $\eta =\lambda /(\alpha
+\lambda )$ (see \cite{KOZ} and \cite{VEL}).

\vspace*{0.3cm} \noindent Let $\left\{ N_{\beta }(t,\lambda )\right\}
_{t\geq 0}$ be the time-fractional Poisson process (see \cite{LAS}, \cite%
{MEE}), where $N_{\beta }(t,\lambda )=N(E_{\beta }(t),\lambda )$ and $%
\{E_{\beta }(t)\}_{t\geq 0}$ is the inverse-stable subordinator. \newline
Recently, the time-fractional NB process $\{Q_{\beta }(t,\lambda )\}_{t\geq
0}=\left\{ N_{\beta }(Y(t),\lambda )\right\} _{t\geq 0}$ was introduced and
its properties investigated detail in \cite{VEL}. Let $\left\{ D_{\beta
}(t)\right\} _{t\geq 0}$, $0<\beta <1$, be a $\beta $-stable subordinator.
Then the space-fractional Poisson process (SFPP) defined as $\left\{
\overline{N}_{\beta }(t,\lambda )\right\} _{t\geq 0}=\left\{ N(D_{\beta
}(t),\lambda )\right\} _{t\geq 0}$ was introduced and discussed in \cite{ORS}%
. Therefore, it is of interest to define the space-fractional negative
binomial (SFNB) process, analogous to SFPP (see Def.4 below). Note that the
SFNB process would be a L\'{e}vy process, unlike the time-fractional NB
process studied in \cite{VEL}.

\noindent In Section 2, we introduce the SFNB process, obtain its
one-dimensional distributions and also derive the underlying governing
equations.
Moreover, we prove that it is equivalent, in the sense
of the finite-dimensional distributions, to a compound-sum
process, where the leading counting process is represented by a
SFPP. This representation shows that the SFNB process could be
to be suitable for some biological applications, in the
analysis of groups of individuals distributed randomly in space
(or time). More precisely, let the number of colonies be
represented by a space-fractional Poisson process and let the
number of individuals in the colonies be distributed independently
(according to a generalization of the logarithmic distribution).
Then the total size of the population follows a SFNB process. The
use of NB distribution in biological data dates back to the
nineteen fifties (see, e.g., \cite{BLI} and \cite{ANS}).
 The SFNB process could be useful for practical situations where the
evolution of the number of groups or colonies is not an orderly
process, { \it i.e.}, it can increase  more than one unit in a
infinitely small interval of time with positive probability. On
the contrary, when modeling this kind of population evolutions by
the standard NB process, multiple occurrences are not allowed
(see, e.g. \cite{KOZ} and the references therein).

\noindent In Section 3, we present two different generalizations of the
previous results: the first one is obtained by taking the fractional index $%
\beta $ random instead of constant in $(0,1)$. In particular, we will
consider the case of a two-valued discrete random variable. The second
extension is obtained by considering the following subordination $\overline{W%
}_{\beta }(t,u)=\overline{N}_{\beta }(u,Y^{d}(t))$, where $\left\{ \overline{%
N}_{\beta }(\cdot ,Y^{d}(t))\right\} _{t\geq 0}$ is the space-fractional
Poisson process with $\lambda $ replaced by the process $Y^{d}(t)$ and $\ d$
is a positive constant. We call $\left\{ \overline{W}_{\beta }(t))\right\}
_{t\geq 0}$ space-fractional \textquotedblleft Polya-type" process, in
analogy to \cite{VEL}.

\noindent Finally, in Section 4, we consider the multidimensional extension
of the SFNB process, by subordinating the space-fractional Poisson process
studied in \cite{BEGM} by a common gamma subordinator. The process is a
multidimensional L\'{e}vy process, for which we evaluate the corresponding L%
\'{e}vy measure. For all the above mentioned processes, we give the
one-dimensional distributions and the corresponding governing equations.
\newline

\section{Space fractional negative binomial process}

First, we introduce some preliminary definitions and results that will be
used later.

\subsection{Preliminary definitions and results}

We start with the space-fractional Poisson process (SFPP) $\left\{ \overline{%
N}_{\beta }(t,\lambda )\right\} _{t\geq 0}=\left\{ N(D_{\beta }(t),\lambda
)\right\} _{t\geq 0}$ . Its one-dimensional distributions are given by (see
\cite{ORS})
\begin{equation}
\overline{p}_{_{\beta }}(n|t,\lambda )=\mathbb{P}[\overline{N}_{\beta
}(t,\lambda )=n]=\frac{(-1)^{n}}{n!}\sum\limits_{k=0}^{\infty }\frac{%
(-\lambda ^{\beta }t)^{k}}{k!}\frac{\Gamma (\beta k+1)}{\Gamma (\beta k+1-n)}%
,~t\geq 0.  \label{neqn2}
\end{equation}%
It solves the fractional difference-differential equation (\cite[eq. (2.4)]%
{ORS}) defined by
\begin{align}
\frac{\partial }{\partial t}\overline{p}_{_{\beta }}(n|t,\lambda )&
=-\lambda ^{\beta }(1-B)^{\beta }\overline{p}_{_{\beta }}(n|t,\lambda
),~~~~\beta \in (0,1],  \label{neqn3} \\
\overline{p}_{_{\beta }}(n|0,\lambda )& =\left\{
\begin{array}{ll}
1, & \text{for }n=0,\nonumber \\
0, & \mbox{for }n>0,%
\end{array}%
\right.  \notag
\end{align}%
where $B$ is the backward shift operator defined by $Bu(n)=u(n-1)$ for any
function $u:\mathbb{N}\rightarrow \mathbb{R}.$

We will derive the explicit expression of the distribution of the SFNB
process in terms of the generalized Wright function which is defined as (see
\cite{KIL})
\begin{equation*}
_{p}\Psi _{q}\left[ \left. z\right\vert _{(b_{j},\beta
_{j})_{1,q}}^{(a_{i},\alpha _{i})_{1,p}}\right] :=\sum_{k=0}^{\infty }\frac{%
\prod\limits_{l=1}^{p}\Gamma (a_{i}+\alpha _{i}k)}{\prod\limits_{j=1}^{q}%
\Gamma (b_{j}+\beta _{j}k)}\frac{z^{k}}{k!},
\end{equation*}%
for $z,a_{i},b_{j}\in \mathbb{C}$, $\alpha _{i},\beta _{j}\in \mathbb{R}$, $%
i=1,...,p,$ $j=1,...,q.$ In view of what follows we introduce the following
symbols: $\Delta :=\sum_{j=1}^{q}\beta _{j}-\sum_{j=1}^{p}\alpha _{j}$ and $%
\delta :=\prod\limits_{l=1}^{p}|\alpha _{l}|^{-\alpha
_{l}}\prod\limits_{j=1}^{q}|\beta _{j}|^{\beta _{j}}.$

\noindent Also, we will obtain the space-fractional differential equation
satisfied by the distribution of the SFNB process in terms of the shift
operator
\begin{equation}
e^{cD_{x}}f(x):=\sum_{n=0}^{\infty }\frac{c^{n}D_{x}^{n}}{n!}f(x)=f(x+c),
\label{shi}
\end{equation}%
where $D_{x}=d/dx$, which is defined for any analytic function $f:\mathbb{R}%
\rightarrow \mathbb{R}$ and $c\in \mathbb{R}.$\newline

\noindent Let $\{Y(t)\}_{t \geq 0}$ denote the gamma subordinator, where $%
Y(t) \sim G(\alpha ,pt)$, the gamma distribution, with density
\begin{equation}  \label{gdensity}
g(z|\alpha ,pt)=\frac{\alpha ^{pt}}{\Gamma (pt)}z^{pt-1}e^{-\alpha z},\qquad
\alpha ,p,z,t>0.
\end{equation}
We need also the following result on the differential equation satisfied by
the density of the gamma subordinator, proved in \cite{BEG2}.

\begin{lemma}
\label{ecp}The one-dimensional density $g(x|\alpha ,pt)$, defined in %
\eqref{gdensity}, of the gamma subordinator $\left\{ Y(t)\right\} _{t\geq 0}$
satisfies the following Cauchy problem, for $x,t\geq 0,$%
\begin{equation}
\left\{
\begin{array}{l}
\frac{\partial }{\partial x}g(x|\alpha ,pt)=-\alpha (1-e^{-\frac{1}{p}%
\partial _{t}})g(x|\alpha ,pt) \\
g(x|\alpha ,0)=\delta (x) \\
\lim_{|x|\rightarrow +\infty }g(x|\alpha ,pt)=0,%
\end{array}%
\right.  \label{res}
\end{equation}%
where $e^{-\frac{1}{p}\partial _{t}}$ is the partial derivative version of
the shift operator defined in (\ref{shi}), for $c=-1/p,$ and $\delta (x)$ is
the Dirac delta function.
\end{lemma}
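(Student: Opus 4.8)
The plan is to verify the three conditions of the Cauchy problem \eqref{res} directly from the explicit form of the gamma density \eqref{gdensity}. The initial and boundary conditions are immediate: as $t\to 0^{+}$ the density $g(x|\alpha,pt)=\frac{\alpha^{pt}}{\Gamma(pt)}x^{pt-1}e^{-\alpha x}$ concentrates its unit mass at the origin (this is the standard fact that $G(\alpha,pt)\Rightarrow\delta_{0}$ as $t\to 0^{+}$, since $\mathbb{E}\,Y(t)=pt/\alpha\to 0$ and $\mathrm{Var}\,Y(t)=pt/\alpha^{2}\to 0$), giving $g(x|\alpha,0)=\delta(x)$ in the distributional sense; and for fixed $t>0$ the exponential factor $e^{-\alpha x}$ forces $g(x|\alpha,pt)\to 0$ as $|x|\to\infty$ (recall $x\ge 0$, so $|x|\to\infty$ means $x\to\infty$). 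The substantive point is the governing equation itself.

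For the differential equation, I would first expand the right-hand side using the definition \eqref{shi} of the shift operator with $c=-1/p$:
\begin{equation*}
-\alpha\bigl(1-e^{-\frac{1}{p}\partial_{t}}\bigr)g(x|\alpha,pt)
=-\alpha\,g(x|\alpha,pt)+\alpha\,g\!\left(x\,\Big|\,\alpha,p\Bigl(t-\tfrac1p\Bigr)\right)
=-\alpha\,g(x|\alpha,pt)+\alpha\,g(x|\alpha,pt-1).
\end{equation*}
Thus the claim reduces to the functional-differential identity
\begin{equation*}
\frac{\partial}{\partial x}g(x|\alpha,pt)=-\alpha\,g(x|\alpha,pt)+\alpha\,g(x|\alpha,pt-1),
\end{equation*}
which I would check by straightforward differentiation. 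Writing $m=pt$ for brevity, $\partial_{x}\bigl(\frac{\alpha^{m}}{\Gamma(m)}x^{m-1}e^{-\alpha x}\bigr)=\frac{\alpha^{m}}{\Gamma(m)}\bigl((m-1)x^{m-2}-\alpha x^{m-1}\bigr)e^{-\alpha x}$. The term $-\alpha\cdot\frac{\alpha^{m}}{\Gamma(m)}x^{m-1}e^{-\alpha x}$ matches $-\alpha g(x|\alpha,m)$ exactly, while the remaining term $\frac{(m-1)\alpha^{m}}{\Gamma(m)}x^{m-2}e^{-\alpha x}=\frac{\alpha^{m}}{\Gamma(m-1)}x^{m-2}e^{-\alpha x}=\alpha\cdot\frac{\alpha^{m-1}}{\Gamma(m-1)}x^{(m-1)-1}e^{-\alpha x}=\alpha\,g(x|\alpha,m-1)$, using $\Gamma(m)=(m-1)\Gamma(m-1)$. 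This establishes the identity.

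Two caveats deserve attention and constitute the only real obstacle, such as it is. First, one must make sense of $g(x|\alpha,pt-1)$ when $pt\le 1$, where $\Gamma(pt-1)$ is no longer given by the elementary density formula; here one should interpret the shift-operator action via its series \eqref{shi}, i.e. as the formal/analytic continuation in the shape parameter, and note that the identity between the two sides is an identity of analytic functions of the parameter $m=pt$ on $\mathbb{R}\setminus\{0,-1,-2,\dots\}$, so it persists by the identity theorem once verified for $m>1$; the factor $1/\Gamma(m-1)$ vanishes appropriately at the nonpositive integers. Second, the equation should be understood for $x>0$ in the classical sense and globally in the sense of distributions (incorporating the initial datum $\delta(x)$), exactly as in the analogous treatment for the stable subordinator density in \cite{BEG2}; since the lemma is quoted from \cite{BEG2}, I would simply cite that source for these technical points rather than re-derive them.
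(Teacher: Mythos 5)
Your verification is correct, and the key identity is exactly the right one: with $m=pt$, differentiating \eqref{gdensity} in $x$ and using $\Gamma(m)=(m-1)\Gamma(m-1)$ gives $\partial_x g(x|\alpha,m)=-\alpha g(x|\alpha,m)+\alpha g(x|\alpha,m-1)$, which is the equation in \eqref{res} once the shift operator \eqref{shi} with $c=-1/p$ is read as the translation $pt\mapsto pt-1$ in the shape parameter. The point of comparison is that the paper itself gives no proof of this lemma: it is stated as imported from \cite{BEG2}, so there is nothing internal to match your argument against. Your self-contained derivation is the natural elementary route, and your two caveats are the right ones to flag: the interpretation of $g(x|\alpha,pt-1)$ for $pt\le 1$ is handled because, for fixed $x>0$, $t\mapsto \alpha^{pt}x^{pt-1}e^{-\alpha x}/\Gamma(pt)$ is entire (the reciprocal gamma function being entire and vanishing at the nonpositive integers), so the series in \eqref{shi} converges and agrees with the translated expression for all $t$, not just $pt>1$; and the initial condition $g(x|\alpha,0)=\delta(x)$ holds only distributionally, via weak convergence of $G(\alpha,pt)$ to the point mass at $0$ as $t\to 0^{+}$, exactly as you argue from the vanishing mean and variance. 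Deferring the remaining distributional technicalities to \cite{BEG2}, which the paper itself cites for the lemma, is appropriate. One minor remark: since the support is $[0,\infty)$, the boundary condition $\lim_{|x|\to+\infty}g=0$ is indeed just decay as $x\to+\infty$, as you note, so no gap there either.
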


An alternative differential equation satisfied by the one-dimensional
distributions of the SFNB process is given in terms of fractional
derivatives and thus we recall the following.

\begin{definition}[Riemann-Liouville fractional derivative]
Let $m\in \mathbf{Z}_{+}\backslash\{0\}$ and $\nu\geq0$. If $f(t)\in
AC^{m-1}[0,T],$ $0\leq t\leq T$, then the (left-hand) Riemann-Liouville
(R-L) fractional derivative $\partial_{t}^{\nu}(\cdot)$ is defined by
\begin{equation}  \label{rld}
\partial_{t}^{\nu}f(t):=
\begin{cases}
\hfill \dfrac{1}{\Gamma(m-\nu)}\dfrac{d^{m}}{dt^{m}}\displaystyle%
\int\limits_{0}^{t}\dfrac{f(s)}{(t-s)^{\nu+1-m}}ds, \hfill & m-1<\nu<m , \\
&  \\
\dfrac{d^{m}}{dt^{m}}f(t), \,\,\,\,\,\,\,\,\,\,\, \nu=m, &
\end{cases}%
\end{equation}
where $AC^{n}[0,T]$ denotes the space of absolutely continuous functions
whose $(n-1)$-th derivatives are also continuous on $[0,T]$.
\end{definition}

We need also the following result which is Lemma 4.2 of \cite{VEL}.

\begin{lemma}
\label{lem4} \label{rlfdegamma} The one-dimensional density $g(x|\alpha ,pt)$%
, given in \eqref{gdensity}, of the gamma subordinator $\left\{ Y(t)\right\}
_{t\geq 0}$ satisfies also the following time-fractional differential
equation, for any $\nu\geq0$:
\begin{align}
\partial _{t}^{\nu }g(y|\alpha ,pt)& =p\partial _{t}^{\nu -1}[\log (\alpha
y)-\psi (pt)]g(y|\alpha ,pt),\,\,\,y>0,  \label{rlfdeeq} \\
g(y|\alpha ,0)& =0 ,  \label{gg}
\end{align}%
where $\psi (x):=\Gamma ^{\prime }(x)/\Gamma (x)$ is the digamma function
and $\partial _{t}^{\nu }(\cdot )$ is the R-L derivative defined in %
\eqref{rld}.
\end{lemma}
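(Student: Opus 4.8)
The plan is to reduce the statement to the elementary case $\nu=1$, which follows from a one–line logarithmic differentiation, and then to apply the operator $\partial_{t}^{\nu-1}$ (a fractional integration of order $1-\nu$ when $\nu<1$) to both sides. The only thing that has to be watched is the boundary term produced by the composition rule for Riemann--Liouville operators, and this is exactly where the initial condition \eqref{gg} is used.

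First I would record the regularity and the initial value. Since $1/\Gamma$ is an entire function, for fixed $y,\alpha,p>0$ the map $t\mapsto g(y|\alpha,pt)=\frac{\alpha^{pt}}{\Gamma(pt)}y^{pt-1}e^{-\alpha y}$ extends to an entire function of $t$; in particular it lies in $AC^{m}[0,T]$ for every $m$, so all fractional derivatives in \eqref{rlfdeeq} are well defined, and $g(y|\alpha,0)=0$ because $1/\Gamma(pt)\to 0$ as $t\downarrow 0$, which gives \eqref{gg}. Taking logarithms in \eqref{gdensity} yields $\log g(y|\alpha,pt)=pt\log\alpha-\log\Gamma(pt)+(pt-1)\log y-\alpha y$, and differentiating in $t$, using $\frac{d}{dt}\log\Gamma(pt)=p\,\psi(pt)$, gives
\begin{equation*}
\frac{\partial}{\partial t}g(y|\alpha,pt)=p\bigl[\log(\alpha y)-\psi(pt)\bigr]g(y|\alpha,pt),
\end{equation*}
which is \eqref{rlfdeeq} in the case $\nu=1$.

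For general $\nu\ge 0$ I would apply $\partial_{t}^{\nu-1}$ to both sides of the last display. The right-hand side becomes $p\,\partial_{t}^{\nu-1}\{[\log(\alpha y)-\psi(pt)]g(y|\alpha,pt)\}$, i.e.\ the right-hand side of \eqref{rlfdeeq}, while on the left-hand side I would invoke the composition identity for the R--L derivative,
\begin{equation*}
\partial_{t}^{\nu-1}\Bigl(\frac{\partial}{\partial t}f\Bigr)(t)=\partial_{t}^{\nu}f(t)-\frac{f(0)\,t^{-\nu}}{\Gamma(1-\nu)},
\end{equation*}
valid for $f\in AC^{1}[0,T]$; taking $f(t)=g(y|\alpha,pt)$ and using \eqref{gg}, the correction term vanishes and the left-hand side equals $\partial_{t}^{\nu}g(y|\alpha,pt)$, which finishes the argument. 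The genuinely delicate step is precisely this last one: one must make sure that $g(y|\alpha,pt)$ is regular enough in $t$ for the composition formula to be applicable (guaranteed by the analyticity noted above) and that for a single integer derivative the formula contributes exactly one boundary term, $f(0)t^{-\nu}/\Gamma(1-\nu)$, which is classical and holds uniformly for all $\nu\ge0$ (for $\nu<1$ it is the statement that $\partial_{t}^{\nu}f=\tfrac{d}{dt}\,\partial_{t}^{\nu-1}f=\partial_{t}^{\nu-1}\tfrac{d}{dt}f+f(0)t^{-\nu}/\Gamma(1-\nu)$). Everything else is the elementary computation that produces the digamma function.
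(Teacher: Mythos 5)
Your proof is correct, but there is nothing in the paper itself to compare it against: the authors state this lemma without proof, quoting it as Lemma 4.2 of \cite{VEL}, so any verification must be self-contained, as yours is. Your two ingredients are exactly the right ones. First, logarithmic differentiation of \eqref{gdensity} gives the $\nu=1$ identity $\frac{\partial}{\partial t}g(y|\alpha,pt)=p\,[\log(\alpha y)-\psi(pt)]\,g(y|\alpha,pt)$. Second, the composition rule $\partial_t^{\nu-1}(f')=\partial_t^{\nu}f-f(0)\,t^{-\nu}/\Gamma(1-\nu)$ (the Riemann--Liouville versus Caputo discrepancy for a single integer derivative, which indeed produces only the one boundary term $f(0)t^{-\nu}/\Gamma(1-\nu)$ for every $\nu\geq 0$) turns that identity into \eqref{rlfdeeq}, the correction term being annihilated precisely by $g(y|\alpha,0)=0$; this makes transparent why the initial condition \eqref{gg} is part of the statement. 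Your regularity discussion is adequate: for fixed $y>0$ the map $t\mapsto g(y|\alpha,pt)$ extends to an entire function vanishing at $t=0$, hence lies in $AC^{m}[0,T]$ for every $m$, which is what the composition rule needs when $\nu>1$ (the bare hypothesis $f\in AC^{1}[0,T]$ that you display would only cover $0\leq\nu\leq 1$, but your analyticity remark closes that gap), and the right-hand side $[\log(\alpha y)-\psi(pt)]g(y|\alpha,pt)=\tfrac1p\partial_t g(y|\alpha,pt)$ is bounded near $t=0$, so $\partial_t^{\nu-1}$ of it is well defined even for $\nu<1$. In short: a complete and correct derivation of a result the paper merely imports from \cite{VEL}.
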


\subsection{The SFNB process and its properties}

We are now ready to introduce the SFNB process in terms of the SFPP as
follows:

\begin{definition}
Let $\left\{ \overline{N}_{\beta }(t,\lambda )\right\} _{t\geq 0}$ be the
SFPP. Then the process defined by
\begin{equation}
\overline{Q}_{\beta }(t,\lambda )=\overline{N}_{\beta }(Y(t),\lambda ),
\label{one}
\end{equation}%
where $\left\{ Y(t)\right\} _{t\geq 0}$ is an independent gamma
subordinator, is called the SFNB process.
\end{definition}

The one-dimensional distribution of (\ref{one}) can be written as
\begin{equation}
\overline{\delta }_{\beta }(n|\alpha ,pt,\lambda ):=\mathbb{P}\left\{
\overline{Q}_{\beta }(t,\lambda )=n\right\} =\int_{0}^{+\infty }\overline{p}%
_{\beta }(n|z,\lambda )g(z|\alpha ,pt)dz,  \label{two}
\end{equation}%
where $\overline{p}_{\beta }(n|z,\lambda )$ is the probability mass function
(pmf) of SFPP defined in \eqref{neqn2} and $g(z|\alpha, pt)$ is the density
of $\Gamma (\alpha, pt)$, defined in \eqref{gdensity}.

We next present our first result on the SFNB process.

\begin{theorem}
The one-dimensional distribution of the process $\left\{ \overline{Q}_{\beta
}(t,\lambda )\right\} _{t\geq 0}$ is given by%
\begin{equation}
\overline{\delta }_{\beta }(n|\alpha ,pt,\lambda )=\frac{(-1)^{n}}{n!}\frac{1%
}{\Gamma (pt)}\,_{2}\Psi _{1}\left[ \left. -\frac{\lambda ^{\beta }}{\alpha }%
\right\vert _{(1-n,\beta )}^{(1,\beta )\;(pt,1)}\right] ,\quad n\in \mathbb{Z%
}^{+},\text{ }t\geq 0  \label{dis}
\end{equation}%
and satisfies the following space-fractional equation%
\begin{equation}
(e^{-\frac{1}{p}\partial _{t}}-1)\overline{\delta }_{\beta }(n|\alpha
,pt,\lambda )=\frac{\lambda ^{\beta }}{\alpha }(I-B)^{\beta }\overline{%
\delta }_{\beta }(n|\alpha ,pt,\lambda ),  \label{eq}
\end{equation}%
with initial condition $\overline{\delta }_{\beta }(n|\alpha ,0,\lambda
)=1_{\left\{ n=0\right\} }$ and $B$ is the backward operator.
\end{theorem}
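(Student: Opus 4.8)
The plan is to evaluate the integral \eqref{two} directly by inserting the series \eqref{neqn2} for $\overline p_\beta(n\mid z,\lambda)$ and the explicit gamma density \eqref{gdensity}, then integrating term by term in $z$. Writing
\[
\overline\delta_\beta(n\mid\alpha,pt,\lambda)=\frac{(-1)^n}{n!}\,\frac{\alpha^{pt}}{\Gamma(pt)}\sum_{k=0}^{\infty}\frac{(-\lambda^\beta)^k}{k!}\,\frac{\Gamma(\beta k+1)}{\Gamma(\beta k+1-n)}\int_0^{\infty} z^{k}\,z^{pt-1}e^{-\alpha z}\,dz,
\]
the inner integral is $\Gamma(pt+k)\,\alpha^{-(pt+k)}$, so after cancelling $\alpha^{pt}$ the $k$-th term acquires the factor $\Gamma(pt+k)\,\alpha^{-k}$. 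Collecting powers, the sum becomes $\sum_k \frac{\Gamma(\beta k+1)\,\Gamma(pt+k)}{\Gamma(\beta k+1-n)}\,\frac{1}{k!}\bigl(-\lambda^\beta/\alpha\bigr)^k$, which is exactly $\,_2\Psi_1$ with upper parameters $(1,\beta),(pt,1)$ and lower parameter $(1-n,\beta)$ evaluated at $-\lambda^\beta/\alpha$; this gives \eqref{dis}. The one point needing care is the justification of the term-by-term integration: I would invoke dominated convergence using the fact that $\bigl|\Gamma(\beta k+1)/\Gamma(\beta k+1-n)\bigr|$ grows only polynomially in $k$ (it is a product of $n$ linear factors in $k$), so the series converges absolutely and uniformly on compact $z$-intervals, and the tail is controlled by an integrable majorant; one should also note $\Delta=1+n-\beta>0$ (since $\beta<1$), which guarantees the resulting $\,_2\Psi_1$ is entire in its argument.

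For the governing equation \eqref{eq}, I would not manipulate the Wright-function series but rather push the operators through the integral representation \eqref{two}. Apply $\frac{\lambda^\beta}{\alpha}(I-B)^\beta$ to $\overline\delta_\beta(n\mid\alpha,pt,\lambda)=\int_0^\infty \overline p_\beta(n\mid z,\lambda)\,g(z\mid\alpha,pt)\,dz$; since this operator acts only on the $n$-variable it commutes with the $z$-integral, and by the SFPP equation \eqref{neqn3} we have $\lambda^\beta(I-B)^\beta\overline p_\beta(n\mid z,\lambda)=-\frac{\partial}{\partial z}\overline p_\beta(n\mid z,\lambda)$. Hence
\[
\frac{\lambda^\beta}{\alpha}(I-B)^\beta\overline\delta_\beta(n\mid\alpha,pt,\lambda)=-\frac1\alpha\int_0^\infty \Bigl(\frac{\partial}{\partial z}\overline p_\beta(n\mid z,\lambda)\Bigr)g(z\mid\alpha,pt)\,dz.
\]
Integrate by parts in $z$: the boundary terms vanish because $g(z\mid\alpha,pt)\to0$ as $z\to\infty$ and, for $pt>0$, as $z\to0^+$ (using $\overline p_\beta$ bounded), leaving $\frac1\alpha\int_0^\infty \overline p_\beta(n\mid z,\lambda)\,\frac{\partial}{\partial z}g(z\mid\alpha,pt)\,dz$. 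Now I invoke Lemma \ref{ecp}: $\frac{\partial}{\partial z}g(z\mid\alpha,pt)=-\alpha\bigl(1-e^{-\frac1p\partial_t}\bigr)g(z\mid\alpha,pt)$, so the $\alpha$'s cancel and, pulling the $t$-operator out of the $z$-integral, the right side equals $-\bigl(1-e^{-\frac1p\partial_t}\bigr)\overline\delta_\beta(n\mid\alpha,pt,\lambda)=\bigl(e^{-\frac1p\partial_t}-1\bigr)\overline\delta_\beta(n\mid\alpha,pt,\lambda)$, which is \eqref{eq}.

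The main obstacle I anticipate is the rigor of the interchanges: interchanging $(I-B)^\beta$ (a nonlocal operator defined by the binomial series $\sum_j\binom{\beta}{j}(-B)^j$) with the improper $z$-integral, and interchanging the shift operator $e^{-\frac1p\partial_t}$ with the $z$-integral, must be justified by absolute convergence / differentiation under the integral sign, using decay estimates on $\overline p_\beta(n\mid z,\lambda)$ and on $g$ and its $z$-derivative; I would state the needed bound $|\overline p_\beta(n\mid z,\lambda)|\le C_n(1+z)^{?}$ coming from \eqref{neqn2} and the known behaviour of the stable subordinator, and verify that the integration-by-parts boundary term at $z=0$ really vanishes (it does for $pt>0$; for $t=0$ the initial condition $\overline\delta_\beta(n\mid\alpha,0,\lambda)=1_{\{n=0\}}$ follows directly since $Y(0)=0$ a.s., i.e. $g(\cdot\mid\alpha,0)=\delta(\cdot)$ and $\overline p_\beta(n\mid 0,\lambda)=1_{\{n=0\}}$). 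Everything else is bookkeeping with Gamma functions.
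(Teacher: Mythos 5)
Your argument is essentially the paper's own: you obtain \eqref{dis} exactly as in the paper, by integrating the series \eqref{neqn2} term by term against the gamma density \eqref{gdensity}, and your derivation of \eqref{eq} uses the same three ingredients (the SFPP equation \eqref{neqn3}, integration by parts in $z$, and Lemma \ref{ecp}), merely run from the right-hand side instead of the left-hand side; verifying the initial condition probabilistically via $Y(0)=0$ a.s.\ instead of the paper's series computation is a harmless variant.

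One side claim is wrong, and it matters for where the series representation is valid. With the paper's conventions the Wright parameters of \eqref{dis} give $\Delta=\beta-(\beta+1)=-1$ and $\delta=\beta^{-\beta}\beta^{\beta}=1$, not $\Delta=1+n-\beta>0$: the coefficient of $(-\lambda^{\beta}/\alpha)^{k}$ is $\Gamma(\beta k+1)\Gamma(pt+k)/\bigl(k!\,\Gamma(\beta k+1-n)\bigr)\sim C\,k^{\,n+pt-1}$, so the $_{2}\Psi_{1}$ series has radius of convergence exactly $1$ and is \emph{not} entire; the representation \eqref{dis} holds as a convergent series only for $\lambda^{\beta}<\alpha$, which is precisely the restriction the paper records via Theorem 1.5 of Kilbas et al. The same restriction is what your dominated-convergence/Tonelli justification actually requires: integrating your majorant in $z$ produces $\sum_{k}(\lambda^{\beta}/\alpha)^{k}\,\mathrm{poly}(k)$, which converges only when $\lambda^{\beta}/\alpha<1$, so the interchange is not unconditional. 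A smaller point: the boundary term at $z=0$ in the integration by parts does not vanish merely because ``$g\to 0$ for $pt>0$'' --- the density blows up at the origin when $pt<1$; for $n\geq 1$ the term is rescued by $\overline{p}_{\beta}(n|z,\lambda)=O(z)$ as $z\to 0$, while the case $n=0$ (where $\overline{p}_{\beta}(0|z,\lambda)=e^{-\lambda^{\beta}z}$) needs a separate check or a restriction on $pt$; the paper is equally informal here, so this is a remark rather than an objection.
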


\begin{proof}
Formula (\ref{dis}) can be derived by applying (\ref{two}) together with
equation (\ref{neqn2}):
\begin{eqnarray}
\overline{\delta }_{\beta }(n|\alpha ,pt,\lambda ) &=&\frac{(-1)^{n}}{n!}%
\frac{\alpha ^{pt}}{\Gamma (pt)}\sum_{r=0}^{\infty }\frac{(-\lambda ^{\beta
})^{r}}{r!}\frac{\Gamma (\beta r+1)}{\Gamma (\beta r+1-n)}\int_{0}^{+\infty
}z^{r+pt-1}e^{-\alpha z}dz  \notag \\
&=&\frac{(-1)^{n}}{n!}\frac{1}{\Gamma (pt)}\sum_{r=0}^{\infty }\frac{%
(-\lambda ^{\beta }/\alpha )^{r}}{r!}\frac{\Gamma (\beta r+1)\Gamma (r+pt)}{%
\Gamma (\beta r+1-n)}.  \label{cc}
\end{eqnarray}%
The series converges for $|\lambda ^{\beta }/\alpha |<1,$ by Theorem 1.5,
p.56 in \cite{KIL} with $\Delta =\beta -(\beta +1)=-1,$\ $\delta =|\beta
|^{-\beta }|\beta |^{\beta }=1.$ \newline

In order to prove equation (\ref{eq}), we apply formula (\ref{two}) together
with Lemma 1, as follows:
\begin{eqnarray*}
e^{-\frac{1}{p}\partial _{t}}\overline{\delta }_{\beta }(n|\alpha
,pt,\lambda ) &=&\int_{0}^{+\infty }\overline{p}_{\beta }(n|z,\lambda )e^{-%
\frac{1}{p}\partial _{t}}g(z|\alpha ,pt)dz \\
&=&\overline{\delta }_{\beta }(n|\alpha ,pt,\lambda )+\frac{1}{\alpha }%
\int_{0}^{+\infty }\overline{p}_{\beta }(n|z,\lambda )\frac{\partial }{%
\partial z}g(z|\alpha ,pt)dz \\
&=&\overline{\delta }_{\beta }(n|\alpha ,pt,\lambda )-\frac{1}{\alpha }%
\int_{0}^{+\infty }\frac{\partial }{\partial z}\overline{p}_{\beta
}(n|z,\lambda )g(z|\alpha ,pt)dz \\
&=&\overline{\delta }_{\beta }(n|\alpha ,pt,\lambda )+\frac{\lambda ^{\beta }%
}{\alpha }(I-B)^{\beta }\int_{0}^{+\infty }\overline{p}_{\beta }(n|z,\lambda
)g(z|\alpha ,pt)dz,
\end{eqnarray*}%
using (\ref{neqn3}). Note that integration by parts is used to obtain the
third step above. The initial condition is satisfied by (\ref{dis}), as can
be checked by considering that, for $t=0,$ the probability (\ref{cc})
vanishes for any $n>0$, while for $n=0$ it can be written as follows%
\begin{equation*}
\overline{\delta }_{\beta }(0|\alpha ,pt,\lambda )=\sum_{r=0}^{\infty
}(-\lambda ^{\beta }/\alpha )^{r}\binom{r+pt-1}{r}=\frac{1}{(1+\lambda
^{\beta }/\alpha )^{pt}}\text{, }
\end{equation*}%
which is equal to one, for $t=0.$ In the last step, we have applied the
following well-known combinatorial identity (see e.g. \cite{SRI}):%
\begin{equation}
\sum_{r=0}^{\infty }a^{r}\binom{r+\xi -1}{r}=(1-a)^{-\xi }.  \label{id}
\end{equation}
\end{proof}

\begin{remark}
(i) Note that for $\beta =1$, equation (\ref{eq}) reduces to formula (69) in
\cite{BEG3}.

(ii) Formula (\ref{dis}) represents a proper distribution, which can be seen
as follows:
\begin{eqnarray*}
\sum_{n=0}^{\infty }\overline{\delta }_{\beta }(n|\alpha ,pt,\lambda ) &=&%
\frac{1}{\Gamma (pt)}\sum_{j=0}^{\infty }\frac{(-\lambda ^{\beta })^{j}}{%
j!\alpha ^{j}}\Gamma (j+pt)\sum_{n=0}^{\infty }\frac{(-1)^{n}}{n!}\frac{%
\Gamma (\beta j+1)}{\Gamma (\beta j+1-n)} \\
&=&\frac{1}{\Gamma (pt)}\sum_{j=0}^{\infty }\frac{(-\lambda ^{\beta })^{j}}{%
j!\alpha ^{j}}\Gamma (j+pt)(1-1)^{\beta j}=1
\end{eqnarray*}

Also, when $\beta =1$, we can check that (\ref{dis}) reduces to the
distribution of the negative binomial distribution. Indeed, we can write%
\begin{eqnarray*}
\overline{\delta }_{1}(n|\alpha ,pt,\lambda ) &=&\frac{(-1)^{n}}{n!}\frac{1}{%
\Gamma (pt)}\sum_{j=n}^{\infty }\left( -\frac{\lambda }{\alpha }\right) ^{j}%
\frac{\Gamma (j+pt)}{(j-n)!} \\
&=&\frac{\Gamma (n+pt)}{n!\Gamma (pt)}\left( \frac{\lambda }{\alpha }\right)
^{n}\sum_{l=0}^{\infty }\left( -\frac{\lambda }{\alpha }\right) ^{l}\binom{%
n+l+pt-1}{l}~~(\text{using (\ref{id})}) \\
&=&\left( \frac{\lambda }{\alpha +\lambda }\right) ^{n}\left( \frac{\alpha }{%
\alpha +\lambda }\right) ^{pt}\binom{n+pt-1}{n},
\end{eqnarray*}%
which is the distribution of $Q(t,\lambda )\sim NB(pt,\lambda /(\alpha
+\lambda )).$
\end{remark}

\noindent An alternative fractional pde satisfied by the distribution (\ref%
{dis}) can be obtained by applying Lemma 3.

\begin{theorem}
The distribution of the SFNB process $\bar{\delta}_{\beta }\left( n|\alpha
,pt,\lambda \right) $ satisfies the following fractional pde:
\begin{equation*}
\frac{1}{p}\partial _{t}^{\gamma }\bar{\delta}_{\beta }\left( n|\alpha
,pt,\lambda \right) =\partial _{t}^{\gamma -1}[\log \alpha -\psi (pt)]\bar{%
\delta}_{\beta }\left( n|\alpha ,pt,\lambda \right) +\int_{0}^{\infty }(\log
y)\bar{p}_{\beta }\left( n|y,\lambda \right) \partial _{t}^{\gamma
-1}g\left( y|\alpha ,pt\right) dy
\end{equation*}%
with $\gamma \geq 0,$ $\psi (x):=\Gamma ^{\prime }(x)/\Gamma (x)$\ and $\bar{%
\delta}_{\beta }\left( n|\alpha ,0,\lambda \right) =1_{\left\{ n=0\right\} }$%
.
\end{theorem}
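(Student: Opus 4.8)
The plan is to start from the integral representation \eqref{two}, namely
\[
\bar{\delta}_{\beta}(n|\alpha,pt,\lambda)=\int_{0}^{\infty}\bar{p}_{\beta}(n|y,\lambda)\,g(y|\alpha,pt)\,dy,
\]
and simply apply the R--L fractional operator $\partial_{t}^{\gamma}$ under the integral sign. Since $\bar{p}_{\beta}(n|y,\lambda)$ does not depend on $t$, this moves the operator onto the gamma density, giving $\partial_{t}^{\gamma}\bar{\delta}_{\beta}(n|\alpha,pt,\lambda)=\int_{0}^{\infty}\bar{p}_{\beta}(n|y,\lambda)\,\partial_{t}^{\gamma}g(y|\alpha,pt)\,dy$. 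Now invoke Lemma~\ref{lem4} (equation \eqref{rlfdeeq}) to replace $\partial_{t}^{\gamma}g(y|\alpha,pt)$ by $p\,\partial_{t}^{\gamma-1}[\log(\alpha y)-\psi(pt)]g(y|\alpha,pt)$.

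The next step is to split the bracket $\log(\alpha y)-\psi(pt)=\bigl(\log\alpha-\psi(pt)\bigr)+\log y$ and separate the integral into two pieces. In the first piece, $\log\alpha-\psi(pt)$ depends only on $t$ (not on $y$), so it can be pulled out past the $y$-integral but must remain inside $\partial_{t}^{\gamma-1}$; after dividing by $p$ this yields the term $\partial_{t}^{\gamma-1}[\log\alpha-\psi(pt)]\bar{\delta}_{\beta}(n|\alpha,pt,\lambda)$, once one recognizes $\int_{0}^{\infty}\bar{p}_{\beta}(n|y,\lambda)g(y|\alpha,pt)\,dy=\bar{\delta}_{\beta}(n|\alpha,pt,\lambda)$ again. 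The second piece is exactly $\int_{0}^{\infty}(\log y)\,\bar{p}_{\beta}(n|y,\lambda)\,\partial_{t}^{\gamma-1}g(y|\alpha,pt)\,dy$, which is the remaining term in the claimed pde. The initial condition $\bar{\delta}_{\beta}(n|\alpha,0,\lambda)=1_{\{n=0\}}$ follows from the computation already carried out in the proof of the preceding theorem (the probability \eqref{cc} vanishes for $n>0$ at $t=0$ and equals $1$ for $n=0$).

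The main obstacle is justifying the interchange of $\partial_{t}^{\gamma}$ with the integral over $y$, since the R--L derivative is itself an integro-differential operator in $t$: one must check that Fubini/Leibniz applies, i.e. that $\bar{p}_{\beta}(n|y,\lambda)\,g(y|\alpha,pt)$ and the relevant derivatives in $t$ are dominated uniformly enough on $[0,T]\times(0,\infty)$ to differentiate under the integral. This is plausible because $g(\cdot|\alpha,pt)$ and its $t$-derivatives decay like $e^{-\alpha y}$ (up to polynomial and logarithmic factors) while $\bar{p}_{\beta}(n|y,\lambda)$ is bounded by $1$; the only delicate point is integrability of $\log y$ near $y=0$, which is harmless. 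A secondary, more bookkeeping-type obstacle is keeping the operator $\partial_{t}^{\gamma-1}$ in the correct place when extracting the $t$-dependent factor $\log\alpha-\psi(pt)$ from the integral — it cannot be pulled outside $\partial_{t}^{\gamma-1}$, only outside the $y$-integral. Once these points are handled, the identity drops out immediately from Lemma~\ref{lem4}.
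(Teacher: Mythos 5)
Your proposal is correct and takes essentially the same route as the paper: apply $\partial_{t}^{\gamma}$ under the integral in \eqref{two}, invoke Lemma~\ref{lem4} to replace $\partial_{t}^{\gamma}g(y|\alpha,pt)$, split $\log(\alpha y)-\psi(pt)$ into its $t$-dependent and $y$-dependent parts, and recombine the first part with $\bar{\delta}_{\beta}$. The only difference is cosmetic: you explicitly flag the justification of interchanging $\partial_{t}^{\gamma}$ (and $\partial_{t}^{\gamma-1}$) with the $y$-integral, which the paper performs without comment.
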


\begin{proof}
By considering (\ref{rlfdeeq})-(\ref{gg}) we get
\begin{eqnarray*}
\partial _{t}^{\gamma }\bar{\delta}_{\beta }\left( n|\alpha ,pt,\lambda
\right) &=&\partial _{t}^{\gamma }\int_{0}^{\infty }\bar{p}_{\beta }\left(
n|y,\lambda \right) g\left( y|\alpha ,pt\right) dy \\
&=&\int_{0}^{\infty }\bar{p}_{\beta }\left( n|y,\lambda \right) \partial
_{t}^{\gamma }g\left( y|\alpha ,pt\right) dy \\
&=&p\int_{0}^{\infty }\bar{p}_{\beta }\left( n|y,\lambda \right) \partial
_{t}^{\gamma -1}\left[ \log \alpha +\log y-\psi (pt)\right] g\left( y|\alpha
,pt\right) dy \\
&=&p\partial _{t}^{\gamma -1}\log \alpha \bar{\delta}_{\beta }\left(
n|\alpha ,pt,\lambda \right) +p\partial _{t}^{\gamma -1}\int_{0}^{\infty
}(\log y)\bar{p}_{\beta }\left( n|y,\lambda \right) g\left( y|\alpha
,pt\right) dy \\
&&-p\partial _{t}^{\gamma -1}\psi (pt)\bar{\delta}_{\beta }\left( n|\alpha
,pt,\lambda \right) ,
\end{eqnarray*}%
which proves the result.
\end{proof}

The SFNB process (\ref{one}) is defined by time-changing the process $%
\left\{ \overline{N}_{\beta }(t,\lambda )\right\} _{t\geq 0}$ by means of
the gamma subordinator. But, $\left\{ \overline{N}_{\beta }(t,\lambda
)\right\} _{t\geq 0}$ is itself a subordinator and the Laplace transform of
its probability mass function can be derived from (2.12) of \cite{ORS}, i.e.%
\begin{equation*}
\sum_{n=0}^{\infty }e^{-un}\overline{p}_{\beta }(n|z,\lambda )=e^{-\lambda
^{\beta }z(1-e^{-u})^{\beta }}.
\end{equation*}%
As a consequence, $\left\{ \overline{Q}_{\beta }(t,\lambda )\right\} _{t\geq
0}$ is a subordinator with Laplace transform of the distribution equal to
\begin{eqnarray*}
\mathbb{E}e^{-u\overline{Q}_{\beta }(t,\lambda )} &=&\sum_{n=0}^{\infty
}e^{-un}\overline{\delta }_{\beta }(n|\alpha ,pt,\lambda )=\int_{0}^{+\infty
}e^{-\lambda ^{\beta }z(1-e^{-u})^{\beta }}g(z|\alpha ,pt)dz \\
&=&\exp \left\{ -pt\ln \left( 1+\frac{\lambda ^{\beta }(1-e^{-u})^{\beta }}{%
\alpha }\right) \right\} .
\end{eqnarray*}%
Thus the Laplace exponent of $\left\{ \overline{Q}_{\beta }(t,\lambda
)\right\} _{t\geq 0}$ is given by
\begin{equation*}
\psi (u):=-\frac{1}{t}\ln \left( \mathbb{E}e^{-u\overline{Q}_{\beta
}(t,\lambda )}\right) =p\ln \left( 1+\frac{\lambda ^{\beta
}(1-e^{-u})^{\beta }}{\alpha }\right)
\end{equation*}%
and the discrete L\'{e}vy measure can be derived as follows.

\begin{theorem}
The discrete L\'{e}vy measure of the SFNB process is given by%
\begin{equation}
\nu _{\beta }(\cdot )=p\sum_{k=1}^{\infty }(-1)^{k}\delta _{\left\{
k\right\} }(\cdot )\sum_{r=1}^{\infty }\frac{(-\lambda ^{\beta }/\alpha )^{r}%
}{r}\binom{\beta r}{k},  \label{le}
\end{equation}%
where $\delta _{\left\{ k\right\} }(\cdot )$ is the Dirac measure
concentrated at $k.$
\end{theorem}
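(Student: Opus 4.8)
The plan is to extract the Lévy measure from the Laplace exponent $\psi(u) = p\ln\left(1 + \frac{\lambda^\beta(1-e^{-u})^\beta}{\alpha}\right)$ already computed just above the statement. Since $\left\{\overline{Q}_\beta(t,\lambda)\right\}_{t\geq 0}$ is a subordinator taking values in $\mathbb{Z}_+$, its Lévy measure is supported on $\{1,2,\ldots\}$, and by the Lévy--Khintchine representation for such processes we have $\psi(u) = \sum_{k=1}^\infty (1 - e^{-uk})\,\nu_\beta(\{k\})$. So it suffices to expand $\psi(u)$ as a series in $e^{-u}$ and match coefficients. First I would write $\ln(1+x) = -\sum_{r=1}^\infty \frac{(-x)^r}{r}$ with $x = \frac{\lambda^\beta}{\alpha}(1-e^{-u})^\beta$, valid for $|\lambda^\beta/\alpha| < 1$ (the same convergence restriction used in Theorem 2.1), giving
\begin{equation*}
\psi(u) = -p\sum_{r=1}^\infty \frac{(-\lambda^\beta/\alpha)^r}{r}(1-e^{-u})^{\beta r}.
\end{equation*}

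Next I would expand each $(1-e^{-u})^{\beta r}$ by the generalized binomial theorem: $(1-e^{-u})^{\beta r} = \sum_{k=0}^\infty \binom{\beta r}{k}(-1)^k e^{-uk}$, where $\binom{\beta r}{k}$ is the generalized binomial coefficient defined in the introduction. Substituting and isolating the $k=0$ term (which contributes the constant ensuring $\psi(0)=0$ drops out appropriately), the remaining $k\geq 1$ terms give
\begin{equation*}
\psi(u) = \sum_{k=1}^\infty (1 - e^{-uk})\left[ p\sum_{r=1}^\infty \frac{(-\lambda^\beta/\alpha)^r}{r}(-1)^{k+1}\binom{\beta r}{k}\right] + (\text{constant that vanishes}),
\end{equation*}
after using $\sum_{k=0}^\infty (-1)^k\binom{\beta r}{k} = (1-1)^{\beta r} = 0$ for $\beta r > 0$ to absorb the $e^{-u\cdot 0}$ bookkeeping. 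Reading off the bracketed quantity as $\nu_\beta(\{k\})$ yields exactly $\nu_\beta(\{k\}) = p\,(-1)^{k+1}\sum_{r=1}^\infty \frac{(-\lambda^\beta/\alpha)^r}{r}\binom{\beta r}{k} = -p\,(-1)^{k}\sum_{r=1}^\infty \frac{(-\lambda^\beta/\alpha)^r}{r}\binom{\beta r}{k}$, which is the stated formula $\nu_\beta(\cdot) = p\sum_{k=1}^\infty (-1)^k \delta_{\{k\}}(\cdot)\sum_{r=1}^\infty \frac{(-\lambda^\beta/\alpha)^r}{r}\binom{\beta r}{k}$ up to the sign convention implicit in how the Lévy exponent is written; I would reconcile the overall sign carefully against the form of $\psi(u)$.

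The main obstacle I anticipate is justifying the interchange of the two infinite sums (over $r$ and over $k$) and confirming absolute convergence of the resulting double series, so that the rearrangement into the canonical form $\sum_k (1-e^{-uk})\nu_\beta(\{k\})$ is legitimate and $\nu_\beta$ is genuinely a (positive, $\sigma$-finite) Lévy measure with $\sum_k (1\wedge k)\,\nu_\beta(\{k\}) < \infty$. For fixed $u > 0$ the terms $e^{-uk}$ decay geometrically, and $|\binom{\beta r}{k}|$ grows only polynomially in $r$ for fixed $k$ while $\frac{(\lambda^\beta/\alpha)^r}{r}$ decays geometrically under the assumption $\lambda^\beta < \alpha$; this should make the double series absolutely convergent and permit Fubini. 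A secondary point worth a remark is positivity of $\nu_\beta(\{k\})$: this is not obvious termwise but must follow from the fact that $\psi$ is a bona fide Laplace exponent (it is, being the Laplace exponent of the subordinator $\overline{Q}_\beta$), so I would simply invoke that structural fact rather than attempt a direct sign analysis of the alternating series.
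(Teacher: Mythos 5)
Your route is correct, but it is genuinely different from the paper's. The paper invokes Sato's Theorem 30.1 on subordinated L\'evy processes: it writes $\nu_\beta(\cdot)=\int_0^\infty\sum_{k\ge1}\overline{p}_\beta(k|s,\lambda)\,\delta_{\{k\}}(\cdot)\,\mu_\Gamma(s)\,ds$ with $\mu_\Gamma(s)=ps^{-1}e^{-\alpha s}$ the L\'evy measure of the gamma subordinator, substitutes the explicit SFPP pmf and integrates term by term (the $r=0$ term vanishing because $\Gamma(1-k)=\infty$ for $k\ge1$); this yields the stated formula directly and makes nonnegativity of $\nu_\beta$ automatic, being an integral of probabilities against a positive measure. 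You instead invert the L\'evy--Khintchine representation from the Laplace exponent $\psi(u)=p\ln\bigl(1+\lambda^{\beta}(1-e^{-u})^{\beta}/\alpha\bigr)$ computed just before the theorem; this is more elementary (no subordination theorem needed), but it shifts the burden onto the points you partly flag: zero drift for an integer-valued subordinator, uniqueness of the representation so that matching the expansion in $z=e^{-u}$ really identifies $\nu_\beta(\{k\})$, and the Fubini step, which holds for $\lambda^{\beta}<\alpha$ and $u$ large enough and then extends by uniqueness/analyticity (the same implicit restriction $\lambda^{\beta}<\alpha$ appears in the paper's series as well). One concrete correction: your displayed coefficient carries $(-1)^{k+1}$, and there is in fact no residual sign ambiguity to reconcile. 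Using $\sum_{k\ge0}(-1)^k\binom{\beta r}{k}=(1-1)^{\beta r}=0$ to trade $e^{-uk}$ for $e^{-uk}-1$ gives
\begin{equation*}
\psi(u)=-p\sum_{r\ge1}\frac{(-\lambda^{\beta}/\alpha)^{r}}{r}\sum_{k\ge1}(-1)^{k}\binom{\beta r}{k}\bigl(e^{-uk}-1\bigr)
=p\sum_{k\ge1}\bigl(1-e^{-uk}\bigr)(-1)^{k}\sum_{r\ge1}\frac{(-\lambda^{\beta}/\alpha)^{r}}{r}\binom{\beta r}{k},
\end{equation*}
so the coefficient of $(1-e^{-uk})$ is exactly $p\,(-1)^{k}\sum_{r\ge1}\frac{(-\lambda^{\beta}/\alpha)^{r}}{r}\binom{\beta r}{k}$, i.e., the theorem as stated; positivity of these coefficients then follows, as you note, from $\psi$ being a genuine Laplace exponent (or, more transparently, from the paper's integral representation).
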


\begin{proof}
Since the SFNB process is a subordinated process (see (\ref{one})), we can
apply Theorem 30.1, p. 197 in \cite{SAT}, which gives a calculation rule for
the L\'{e}vy measure in the case of subordinated L\'{e}vy processes. Let $%
\mu _{\Gamma }$ be the L\'{e}vy measure of the gamma subordinator $\left\{ Y
(t)\right\} _{t\geq 0}$ (i.e. $\mu _{\Gamma }(s)=ps^{-1}e^{-\alpha s}$).
Then we get%
\begin{eqnarray*}
\nu _{\beta }(\cdot ) &=&\int_{0}^{+\infty }\sum_{k=1}^{\infty }\overline{p}%
_{\beta }(k|s,\lambda ) \delta _{\left\{ k\right\} }(\cdot )\mu _{\Gamma
}(s)ds~~ (\text{using (2)}) \\
&=&p\sum\limits_{k=1}^{\infty }\frac{(-1)^{k}}{k!}\delta _{\left\{ k\right\}
}(\cdot )\sum_{r=0}^{\infty }\frac{(-\lambda ^{\beta })^{r}}{r!}\frac{\Gamma
(\beta r+1)}{\Gamma (\beta r+1-k)}\int_{0}^{+\infty }s^{r-1}e^{-\alpha s}ds
\\
&=&p\sum\limits_{k=1}^{\infty }\frac{(-1)^{k}}{k!}\delta _{\left\{ k\right\}
}(\cdot )\sum_{r=1}^{\infty }\frac{(-\lambda ^{\beta }/\alpha )^{r}}{r}\frac{%
\Gamma (\beta r+1)}{\Gamma (\beta r+1-k)}.
\end{eqnarray*}%
Note that in the last step we have used the fact that the inner sum is zero
for $r=0,$ since $\Gamma (1-k)=\infty $ for any $k\geq 1.$
\end{proof}

It is easy to check that, in the special case $\beta =1,$ formula (\ref{le})
coincides with the well-known L\'{e}vy measure of the NB process:%
\begin{eqnarray*}
\nu _{1}(\cdot ) &=&p\sum\limits_{k=1}^{\infty }(-1)^{k}\delta _{\left\{
k\right\} }(\cdot )\sum_{r=k}^{\infty }\frac{(-\lambda /\alpha )^{r}}{r}%
\binom{r}{k} \\
&=&p\sum\limits_{k=1}^{\infty }(-1)^{k}\delta _{\left\{ k\right\} }(\cdot
)\sum_{l=0}^{\infty }\frac{(-\lambda /\alpha )^{l+k}}{l+k}\binom{l+k}{k} \\
&=&p\sum\limits_{k=1}^{\infty }\frac{(\lambda /\alpha )^{k}}{k}\delta
_{\left\{ k\right\} }(\cdot )\sum_{l=0}^{\infty }(-\lambda /\alpha )^{l}%
\binom{l+k-1}{l} \\
&=&p\sum\limits_{k=1}^{\infty }\frac{\delta _{\left\{ k\right\} }(\cdot )}{k}%
\left( \frac{\lambda }{\alpha +\lambda }\right) ^{k}.
\end{eqnarray*}%
We next present  the following alternative construction, as a compound-sum process,   of the
SFNB process $\left\{ \overline{Q}_{\beta }(t,\lambda )\right\} _{t\geq 0}$, as

\begin{equation}
\overline{Q}_{\beta }(t,\lambda )=\sum_{j=1}^{\overline{N}_{\beta
}(t,\lambda )}X_{j},  \label{new}
\end{equation}%
where the $X_{j},$ for $j=1,2,...$, are i.i.d. r.v.'s with probability generating
function (p.g.f.)
\begin{equation}
G_{X}(u|\alpha ,\beta ):=\mathbb{E}u^{X}=1-\frac{1}{\lambda ^{\beta }}\left[
\log \left( 1+\frac{\lambda ^{\beta }}{\alpha }(1-u)^{\beta }\right) \right]
^{1/\beta },\text{ }|u|\leq 1,\beta \in (0,1].  \label{ipp}
\end{equation}%
It can be checked that the p.g.f. of (\ref{new}) coincides with that of (\ref{one}%
), by recalling the Wald formula, for the compound sum $\sum_{j=1}^{N}X_{j}$:%
\begin{equation}
G_{\sum_{j=1}^{N}X_{j}}(u)=G_{N}\left( G_{X}(u)\right) ,  \label{wal}
\end{equation}%
if $X_{j},$ for $j=1,2,...$, are r.v.'s independent from each other and from $%
N$ and with p.g.f. $G_{X}(u)$. Indeed, from (\ref{cc}), putting for
simplicity $p=1$, we have that%
\begin{align}
G_{\overline{Q}_{\beta }(t)}(u|\alpha ,\beta ) &:=\mathbb{E}u^{\overline{Q}%
_{\beta }(t,\lambda )}=\sum_{n=0}^{\infty }(-u)^{n}\sum_{r=0}^{\infty
}(-\lambda ^{\beta }/\alpha )^{r}\binom{\beta r}{n}\binom{r+t-1}{r}
\label{coi} \\
&=\sum_{r=0}^{\infty }\left( -\frac{\lambda ^{\beta }(1-u)^{\beta }}{\alpha
}\right) ^{r}\binom{r+t-1}{r}  \notag \\
&=\frac{1}{\left( 1+\lambda ^{\beta }(1-u)^{\beta }/\alpha \right) ^{t}}%
=\exp \left[ -t\log \left( 1+\frac{\lambda ^{\beta }}{\alpha }(1-u)^{\beta
}\right) \right] .  \notag
\end{align}%
On the other hand, by using the definition (\ref{new}), together with (\ref%
{wal}), we get that%
\begin{eqnarray*}
G_{\overline{Q}_{\beta }(t)}(u|\alpha ,\beta ) &=&G_{\overline{N}_{\beta
}}\left( G_{X}(u|\alpha ,\beta )\right)  \\
&=&[\text{by (2.12) in \cite{ORS}}] \\
&=&\exp \left[ -\lambda ^{\beta }t\left( 1-G_{X}(u|\alpha ,\beta )\right)
^{\beta }\right] ,
\end{eqnarray*}%
which coincides with (\ref{coi}), by taking into account (\ref{ipp}).

As a consequence of the compound-sum representation (\ref{new}), the SFNB
process can represent a useful tool in biological studies of grouped
populations, when the evolution of the number of groups is not an orderly
process, {\it i.e.}, it can increase of more than one unit in a infinitely small
interval of time, with positive probability. Indeed the counting process $%
\left\{ \overline{N}_{\beta }(t,\lambda )\right\} _{t\geq 0}$ has this
property, since it can be checked from (\ref{neqn2}) that, for $m=1,2, \ldots,$
\begin{equation*}
P\left( \overline{N}_{\beta }(\Delta t,\lambda )=m\right) \simeq (-1)^{m+1}%
\binom{\beta }{m}\lambda ^{\beta }\Delta t+o(\Delta t),\qquad \Delta
t\rightarrow 0,\beta \in (0,1],
\end{equation*}

\noindent For certain chosen values of $\beta,p,t$ and $\lambda$,
two plots (Figures 1 and 2) of the distribution of the SFNB
process, computed using Mathematica 10 to the order of $10^{-12}$,
are given below.
\begin{figure}[hb]
    \caption{Plot of the {\it pmf} of the SFNB process for $\beta=0.9,\alpha=2,p=2,t=5$ and $\lambda=1.$}
    \centering
    \includegraphics[width=0.9\textwidth]{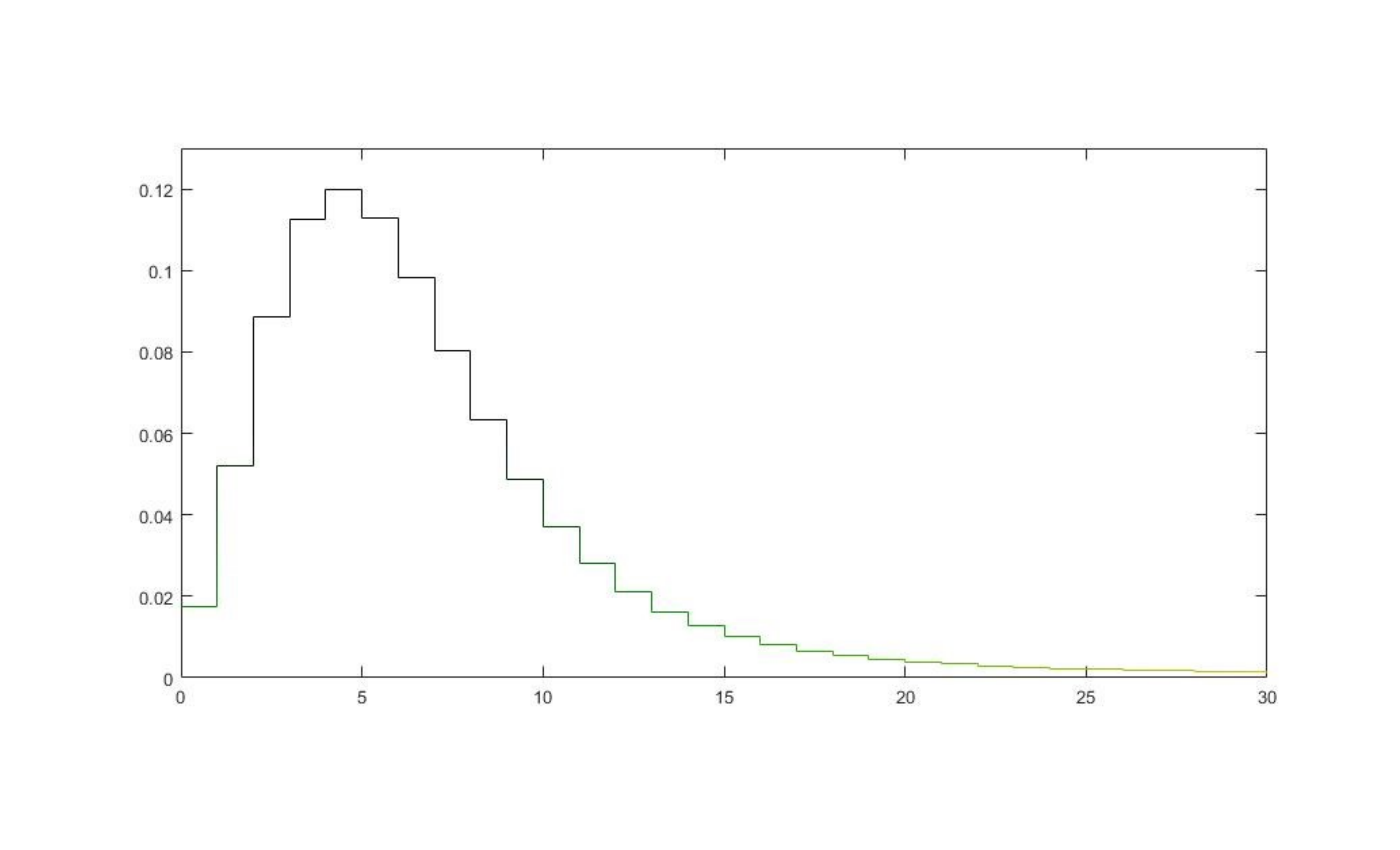}
\end{figure}
\begin{figure}[ht]
    \caption{Plot of the {\it pmf} of the SFNB process for $\beta=0.6,\alpha=3,p=2,t=10$ and $\lambda=2.$}
    \centering
    \includegraphics[width=0.9\textwidth,height=0.35\textheight]{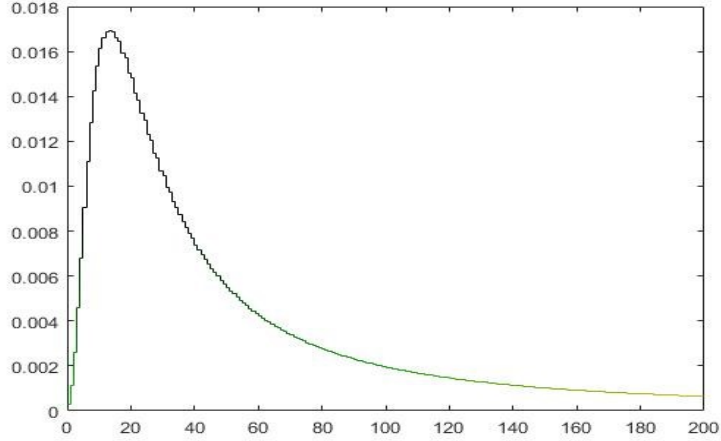}
\end{figure}

\section{Generalizations of the SFNB process}

\subsection{SFNB process of distributed order}

A generalization of the results presented in Section 1 can be obtained by
considering the case where the fractional index $\beta $ in the equation (%
\ref{eq}) satisfied by the distribution of the process (\ref{one}) is itself
random. Distributed-order fractional equations has been already treated in
the literature, in the case of superdiffusions and relaxation equations;
see, for example, \cite{CHE}, \cite{MAI1}, \cite{MAI2}, \cite{BEG1}. \newline
Here, we will assume the random index $\beta $ follows two-valued discrete
distribution, namely,
\begin{equation}
f(\beta )=a_{1}\delta (\beta -\beta _{1})+a_{2}\delta (\beta -\beta
_{2}),\qquad \beta _{1},\beta _{2}\in (0,1),  \label{assu}
\end{equation}%
where $\delta (\cdot )$ is the Dirac delta function and $a_{1},a_{2}\geq 0$
with $a_{1}+a_{2}=1.$ Thus, and also in view of (\ref{eq}), we are
interested in the solution of the following equation%
\begin{equation}
(e^{-\frac{1}{p}\partial _{t}}-1)\overline{\delta }_{\beta _{1},\beta
_{2}}(n|\alpha ,pt,\lambda )f(\beta )d\beta =\left( \int_{0}^{1}\frac{%
\lambda ^{\beta }}{\alpha }(I-B)^{\beta }f(\beta )d\beta \right) \overline{%
\delta }_{\beta _{1},\beta _{2}}(n|\alpha ,pt,\lambda )  \label{dist}
\end{equation}%
(with initial condition $\overline{\delta }_{\beta }(n|\alpha ,0,\lambda
)=1_{\left\{ n=0\right\} })$, which, under the assumption (\ref{assu}), can
be rewritten as follows%
\begin{equation}
(e^{-\frac{1}{p}\partial _{t}}-1)\overline{\delta }_{\beta _{1},\beta
_{2}}(n|\alpha ,pt,\lambda )=\left[ \frac{a_{1}\lambda ^{\beta _{1}}}{\alpha
}(I-B)^{\beta _{1}}+\frac{a_{2}\lambda ^{\beta _{2}}}{\alpha }(I-B)^{\beta
_{2}}\right] \overline{\delta }_{\beta _{1},\beta _{2}}(n|\alpha ,pt,\lambda
).  \label{dist2}
\end{equation}

\begin{theorem}
The solution to equation (\ref{dist2}) is given by%
\begin{equation}
\overline{\delta }_{\beta _{1},\beta _{2}}(n|\alpha ,pt,\lambda )=\frac{%
(-1)^{n}}{n!}\frac{1}{\Gamma (pt)}\,\sum_{r=0}^{\infty }\frac{(-a_{1}\lambda
^{\beta _{1}}/\alpha )^{r}}{r!}\,_{2}\Psi _{1}\left[ \left. -\frac{%
a_{2}\lambda ^{\beta _{2}}}{\alpha }\right\vert _{(1-n+\beta _{1}r,\beta
_{2})}^{(1+\beta _{1}r,\beta _{2})\;(pt+r,1)}\right] ,  \label{dist5}
\end{equation}%
for $n\in \mathbb{Z}^{+},$ $t\geq 0.$
\end{theorem}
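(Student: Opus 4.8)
The plan is to mirror the proof of Theorem 2.1: realise $\overline{\delta}_{\beta_1,\beta_2}$ as the one-dimensional law of a gamma-subordinated process, obtain its explicit form by a series computation, and deduce (\ref{dist2}) from Lemma \ref{ecp} via integration by parts. To this end, let $\overline{p}_{\beta_1,\beta_2}(n|z,\lambda)$ denote the function with probability generating function $\exp\{-z[a_1\lambda^{\beta_1}(1-u)^{\beta_1}+a_2\lambda^{\beta_2}(1-u)^{\beta_2}]\}$; it is the pmf of the superposition of two independent space-fractional Poisson processes of indices $\beta_1,\beta_2$ (with suitably rescaled rates), and, by (\ref{neqn3}), it is the unique solution of $\partial_z\overline{p}_{\beta_1,\beta_2}(n|z,\lambda)=-[a_1\lambda^{\beta_1}(I-B)^{\beta_1}+a_2\lambda^{\beta_2}(I-B)^{\beta_2}]\overline{p}_{\beta_1,\beta_2}(n|z,\lambda)$ with unit mass at $n=0$ for $z=0$. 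Set $\overline{\delta}_{\beta_1,\beta_2}(n|\alpha,pt,\lambda):=\int_0^{\infty}\overline{p}_{\beta_1,\beta_2}(n|z,\lambda)\,g(z|\alpha,pt)\,dz$; the claim is that this agrees with the right-hand side of (\ref{dist5}) and solves (\ref{dist2}).

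For the explicit form I would expand the generating function of $\overline{p}_{\beta_1,\beta_2}$ in powers of $z$, then expand each factor $(1-u)^{\beta_i k}$ by the generalized binomial theorem, extract the coefficient of $u^n$, and integrate the resulting $z$-series against $g(z|\alpha,pt)$ using $\int_0^{\infty}z^{r+pt-1}e^{-\alpha z}\,dz=\Gamma(r+pt)\,\alpha^{-(r+pt)}$, exactly as in (\ref{cc}). Writing the two summation indices as $m,j$ this produces $\overline{\delta}_{\beta_1,\beta_2}(n|\alpha,pt,\lambda)=\frac{(-1)^n}{n!\,\Gamma(pt)}\sum_{m,j\ge 0}\frac{(-a_1\lambda^{\beta_1}/\alpha)^m(-a_2\lambda^{\beta_2}/\alpha)^j}{m!\,j!}\frac{\Gamma(1+\beta_1 m+\beta_2 j)\,\Gamma(pt+m+j)}{\Gamma(1-n+\beta_1 m+\beta_2 j)}$, and collecting the inner ($j$-)sum into a ${}_2\Psi_1$ is precisely (\ref{dist5}). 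The inner ${}_2\Psi_1$ converges for $|a_2\lambda^{\beta_2}/\alpha|<1$ by Theorem 1.5, p.56 of \cite{KIL} (here $\Delta=\beta_2-(\beta_2+1)=-1$, $\delta=1$, as in Theorem 2.1); using in addition $\Gamma(1+\beta_1 m+\beta_2 j)/\Gamma(1-n+\beta_1 m+\beta_2 j)=O((m+j)^n)$ and $\Gamma(pt+m+j)/(m!\,j!)=\binom{m+j}{m}\,O((m+j)^{pt-1})$, the general term of the double series is dominated by $(m+j)^{n+pt-1}\binom{m+j}{m}|a_1\lambda^{\beta_1}/\alpha|^m|a_2\lambda^{\beta_2}/\alpha|^j$, whose double sum is finite when $|a_1\lambda^{\beta_1}/\alpha|+|a_2\lambda^{\beta_2}/\alpha|<1$; this legitimises the term-by-term integration and the rearrangements.

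To obtain (\ref{dist2}) I would repeat the computation in the proof of Theorem 2.1 verbatim, now with $\overline{p}_{\beta_1,\beta_2}$ in place of $\overline{p}_{\beta}$: applying $e^{-\frac{1}{p}\partial_t}$ under the integral and using Lemma \ref{ecp} gives $e^{-\frac{1}{p}\partial_t}\overline{\delta}_{\beta_1,\beta_2}=\int_0^{\infty}\overline{p}_{\beta_1,\beta_2}(n|z,\lambda)\,e^{-\frac{1}{p}\partial_t}g(z|\alpha,pt)\,dz=\overline{\delta}_{\beta_1,\beta_2}+\frac{1}{\alpha}\int_0^{\infty}\overline{p}_{\beta_1,\beta_2}(n|z,\lambda)\,\partial_z g(z|\alpha,pt)\,dz$; integrating by parts and then inserting the governing equation for $\overline{p}_{\beta_1,\beta_2}$ turns the last integral into $-\frac{1}{\alpha}\int_0^{\infty}(\partial_z\overline{p}_{\beta_1,\beta_2})\,g\,dz=\frac{1}{\alpha}\int_0^{\infty}[a_1\lambda^{\beta_1}(I-B)^{\beta_1}+a_2\lambda^{\beta_2}(I-B)^{\beta_2}]\overline{p}_{\beta_1,\beta_2}\,g\,dz$, which equals $[\frac{a_1\lambda^{\beta_1}}{\alpha}(I-B)^{\beta_1}+\frac{a_2\lambda^{\beta_2}}{\alpha}(I-B)^{\beta_2}]\overline{\delta}_{\beta_1,\beta_2}$, i.e.\ (\ref{dist2}). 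The initial condition follows because, as $t\to 0^+$, $\Gamma(pt+m+j)/\Gamma(pt)\to 1_{\{m+j=0\}}$, so only the $m=j=0$ term of the double series survives and equals $\frac{(-1)^n}{n!}\Gamma(1)/\Gamma(1-n)=1_{\{n=0\}}$.

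The only genuinely delicate points are the estimates of the second paragraph that justify the term-by-term integration and the interchanges of the $m$- and $j$-summations with the $n$-summation; once these are in place the remainder is bookkeeping. As consistency checks one may note that setting $a_2=0$ (equivalently $\beta_1=\beta_2=\beta$) collapses (\ref{dist5}) back to (\ref{dis}) after applying the binomial theorem and identity (\ref{id}); and, since $\overline{\delta}_{\beta_1,\beta_2}$ is the law of the gamma-subordination of the above superposition process, it is automatically a proper pmf, which may also be read off directly by resumming $\sum_n u^n\overline{\delta}_{\beta_1,\beta_2}(n|\alpha,pt,\lambda)=(1+\alpha^{-1}[a_1\lambda^{\beta_1}(1-u)^{\beta_1}+a_2\lambda^{\beta_2}(1-u)^{\beta_2}])^{-pt}$ via (\ref{id}) and evaluating at $u=1$.
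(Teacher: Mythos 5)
Your proof is correct, but it takes a genuinely different route from the paper's. You realize $\overline{\delta}_{\beta_1,\beta_2}$ probabilistically, as the gamma subordination of the superposition of two independent SFPPs with indices $\beta_1,\beta_2$ (rates rescaled so the exponents carry $a_1\lambda^{\beta_1}$ and $a_2\lambda^{\beta_2}$), compute the mixture as a double series in $(m,j)$, and transfer the governing equation through Lemma \ref{ecp} by the same shift-operator/integration-by-parts argument used for Theorem 2.1; the paper instead never introduces a leading counting process here: it converts (\ref{dist2}) into an equation for the probability generating function $G_{\beta_1,\beta_2}$, verifies directly that $\bigl[1+\frac{a_1\lambda^{\beta_1}}{\alpha}(1-u)^{\beta_1}+\frac{a_2\lambda^{\beta_2}}{\alpha}(1-u)^{\beta_2}\bigr]^{-pt}$ (formula (\ref{dist4})) satisfies it, and then expands this generating function, arriving at exactly your double series (the paper's (\ref{dist6})) and hence at (\ref{dist5}), finishing with the convergence remark and the normalization check. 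What each approach buys: yours yields a probabilistic interpretation of the distributed-order SFNB (gamma-subordinated superposition of two SFPPs), makes the pgf (\ref{dist4}) a corollary rather than an ansatz, and supplies an explicit domination bound (under $|a_1\lambda^{\beta_1}/\alpha|+|a_2\lambda^{\beta_2}/\alpha|<1$, slightly stronger than the paper's stated condition) legitimizing the interchanges the paper performs tacitly; the paper's pgf route is shorter, avoids integration by parts and boundary terms entirely, and verifies (\ref{dist2}) by a purely algebraic application of the shift operator in $t$. Two small points you should make explicit to be airtight: the claim that the superposed pmf solves $\partial_z\overline{p}_{\beta_1,\beta_2}=-[a_1\lambda^{\beta_1}(I-B)^{\beta_1}+a_2\lambda^{\beta_2}(I-B)^{\beta_2}]\overline{p}_{\beta_1,\beta_2}$ uses that $(I-B)^{\beta}$, being a power series in $B$, commutes with convolution in $n$ (it does not follow verbatim from (\ref{neqn3}) alone), and the vanishing of the boundary terms in the integration by parts should be noted, exactly as in the paper's proof of Theorem 2.1.
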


\begin{proof}
We start by deriving from (\ref{dist2}) the equation satisfied by the
probability generating function $G_{\beta _{1},\beta _{2}}(u|\alpha
,pt,\lambda ):=\sum_{n=0}^{\infty }u^{n}\overline{\delta }_{\beta _{1},\beta
_{2}}(n|\alpha ,pt,\lambda )$:%
\begin{align}
(e^{-\frac{1}{p}\partial _{t}}-1)&G_{\beta _{1},\beta _{2}}(u|\alpha
,pt,\lambda )  \label{dist3} \\
=&\sum_{n=0}^{\infty }u^{n}\left[ \frac{a_{1}\lambda ^{\beta _{1}}}{\alpha }%
(I-B)^{\beta _{1}}+\frac{a_{2}\lambda ^{\beta _{2}}}{\alpha }(I-B)^{\beta
_{2}}\right] \overline{\delta }_{\beta _{1},\beta _{2}}(n|\alpha ,pt,\lambda
)  \notag \\
=&\frac{a_{1}\lambda ^{\beta _{1}}}{\alpha }\sum_{n=0}^{\infty
}u^{n}\sum_{j=0}^{n}(-1)^{j}\binom{\beta _{1}}{j}\overline{\delta }_{\beta
_{1},\beta _{2}}(n-j|\alpha ,pt,\lambda )+  \notag \\
&+\frac{a_{2}\lambda ^{\beta _{2}}}{\alpha }\sum_{n=0}^{\infty
}u^{n}\sum_{j=0}^{n}(-1)^{j}\binom{\beta _{2}}{j}\overline{\delta }_{\beta
_{1},\beta _{2}}(n-j|\alpha ,pt,\lambda )  \notag \\
=&\frac{a_{1}\lambda ^{\beta _{1}}}{\alpha }\sum_{j=0}^{\infty }(-u)^{j}%
\binom{\beta _{1}}{j}\sum_{n=j}^{\infty }u^{n-j}\overline{\delta }_{\beta
_{1},\beta _{2}}(n-j|\alpha ,pt,\lambda )+  \notag \\
&+\frac{a_{2}\lambda ^{\beta _{2}}}{\alpha }\sum_{j=0}^{\infty }(-u)^{j}%
\binom{\beta _{2}}{j}\sum_{n=j}^{\infty }u^{n-j}\overline{\delta }_{\beta
_{1},\beta _{2}}(n-j|\alpha ,pt,\lambda )  \notag \\
=&\left[ \frac{a_{1}\lambda ^{\beta _{1}}}{\alpha }\sum_{j=0}^{\infty
}(-u)^{j}\binom{\beta _{1}}{j}+\frac{a_{2}\lambda ^{\beta _{2}}}{\alpha }%
\sum_{j=0}^{\infty }(-u)^{j}\binom{\beta _{2}}{j}\right] G_{\beta _{1},\beta
_{2}}(u|\alpha ,pt,\lambda )  \notag \\
=&\left[ \frac{a_{1}\lambda ^{\beta _{1}}}{\alpha }(1-u)^{\beta _{1}}+\frac{%
a_{2}\lambda ^{\beta _{2}}}{\alpha }(1-u)^{\beta _{2}}\right] G_{\beta
_{1},\beta _{2}}(u|\alpha ,pt,\lambda ),  \notag
\end{align}%
with the initial condition $G_{\beta _{1},\beta _{2}}(u|\alpha ,0,\lambda
)=1.$ Equation (\ref{dist3}) is satisfied by the following function%
\begin{equation}
G_{\beta _{1},\beta _{2}}(u|\alpha ,pt,\lambda )=\left[ 1+\frac{a_{1}\lambda
^{\beta _{1}}}{\alpha }(1-u)^{\beta _{1}}+\frac{a_{2}\lambda ^{\beta _{2}}}{%
\alpha }(1-u)^{\beta _{2}}\right] ^{-pt}.  \label{dist4}
\end{equation}%
Indeed, it is%
\begin{align*}
e^{-\frac{1}{p}\partial _{t}}&\left[ 1+\frac{a_{1}\lambda ^{\beta _{1}}}{%
\alpha }(1-u)^{\beta _{1}}+\frac{a_{2}\lambda ^{\beta _{2}}}{\alpha }%
(1-u)^{\beta _{2}}\right] ^{-pt} \\
=&\left[ 1+\frac{a_{1}\lambda ^{\beta _{1}}}{\alpha }(1-u)^{\beta _{1}}+%
\frac{a_{2}\lambda ^{\beta _{2}}}{\alpha }(1-u)^{\beta _{2}}\right] ^{-p(t-%
\frac{1}{p})} \\
=&G_{\beta _{1},\beta _{2}}(u|\alpha ,pt,\lambda )+\left[ \frac{a_{1}\lambda
^{\beta _{1}}}{\alpha }(1-u)^{\beta _{1}}+\frac{a_{2}\lambda ^{\beta _{2}}}{%
\alpha }(1-u)^{\beta _{2}}\right] G_{\beta _{1},\beta _{2}}(u|\alpha
,pt,\lambda ).
\end{align*}%
Now we only need to prove that the coefficients in the series expansion of (%
\ref{dist4}) coincide with (\ref{dist5}): for $u$ such that $\frac{%
a_{1}\lambda ^{\beta _{1}}}{\alpha }(1-u)^{\beta _{1}}+\frac{a_{2}\lambda
^{\beta _{2}}}{\alpha }(1-u)^{\beta _{2}}<1$, we can write%
\begin{eqnarray*}
&&G_{\beta _{1},\beta _{2}}(u|\alpha ,pt,\lambda ) \\
&=&\sum_{l=0}^{\infty }\binom{pt+l-1}{l}(-1)^{l}\left[ \frac{a_{1}\lambda
^{\beta _{1}}}{\alpha }(1-u)^{\beta _{1}}+\frac{a_{2}\lambda ^{\beta _{2}}}{%
\alpha }(1-u)^{\beta _{2}}\right] ^{l} \\
&=&\sum_{l=0}^{\infty }\binom{pt+l-1}{l}\left( -\frac{1}{\alpha }\right)
^{l}\sum_{r=0}^{l}\binom{l}{r}a_{1}^{r}\lambda ^{\beta _{1}r}(1-u)^{\beta
_{1}r}a_{2}^{l-r}\lambda ^{\beta _{2}(l-r)}(1-u)^{\beta _{2}(l-r)} \\
&=&\sum_{l=0}^{\infty }\binom{pt+l-1}{l}\left( -\frac{1}{\alpha }\right)
^{l}\sum_{r=0}^{l}\binom{l}{r}a_{1}^{r}\lambda ^{\beta
_{1}r}a_{2}^{l-r}\lambda ^{\beta _{2}(l-r)}\sum_{n=0}^{\infty }\binom{\beta
_{1}r+\beta _{2}(l-r)}{n}(-u)^{n}.
\end{eqnarray*}%
Thus, we get%
\begin{eqnarray}
&&\overline{\delta }_{\beta _{1},\beta _{2}}(n|\alpha ,pt,\lambda )
\label{dist6} \\
&=&\frac{(-1)^{n}}{n!}\frac{1}{\Gamma (pt)}\,\sum_{l=0}^{\infty }\Gamma
(pt+l)\left( -\frac{1}{\alpha }\right) ^{l}\sum_{r=0}^{l}\frac{%
a_{1}^{r}\lambda ^{\beta _{1}r}a_{2}^{l-r}\lambda ^{\beta _{2}(l-r)}}{%
r!(l-r)!}\frac{\Gamma (\beta _{1}r+\beta _{2}(l-r)+1)}{\Gamma (\beta
_{1}r+\beta _{2}(l-r)+1-n)}  \notag \\
&=&\frac{(-1)^{n}}{n!}\frac{1}{\Gamma (pt)}\,\sum_{r=0}^{\infty }\frac{1}{r!}%
\left( \frac{a_{1}\lambda ^{\beta _{1}}}{a_{2}\lambda ^{\beta _{2}}}\right)
^{r}\sum_{l=r}^{\infty }\frac{\Gamma (pt+l)(-1)^{l}}{\alpha ^{l}(l-r)!}\frac{%
a_{2}^{l}\lambda ^{\beta _{2}l}\Gamma (\beta _{1}r+\beta _{2}(l-r)+1)}{%
\Gamma (\beta _{1}r+\beta _{2}(l-r)+1-n)}  \notag \\
&=&\frac{(-1)^{n}}{n!}\frac{1}{\Gamma (pt)}\,\sum_{r=0}^{\infty }\frac{%
(-a_{1}\lambda ^{\beta _{1}})^{r}}{r!\alpha ^{r}}\sum_{l=0}^{\infty }\frac{%
\Gamma (pt+l+r)(-1)^{l}}{\alpha ^{l}l!}\frac{a_{2}^{l}\lambda ^{\beta
_{2}l}\Gamma (\beta _{1}r+\beta _{2}l+1)}{\Gamma (\beta _{1}r+\beta
_{2}l+1-n)},  \notag
\end{eqnarray}%
which coincides with (\ref{dist5}). The inner series converges for $%
\left\vert a_{1}\lambda ^{\beta _{1}}/\alpha \right\vert <1$, since $\Delta
=\beta _{1}-1-\beta _{1}=-1$ and $\delta =\beta _{1}^{-\beta _{1}}\beta
_{1}^{\beta _{1}}=1.$ In order to prove that (\ref{dist5}) represents a
proper distribution, we evaluate the sum of (\ref{dist6}) over all $n\in
\mathbb{Z}^{+}$:%
\begin{eqnarray*}
&&\sum_{n=0}^{\infty }\overline{\delta }_{\beta _{1},\beta _{2}}(n|\alpha
,pt,\lambda ) \\
&=&\frac{1}{\Gamma (pt)}\sum_{l=0}^{\infty }\,\frac{(-a_{2}\lambda ^{\beta
_{2}})^{l}}{\alpha ^{l}l!}\sum_{r=0}^{\infty }\frac{(-a_{1}\lambda ^{\beta
_{1}})^{r}}{r!\alpha ^{r}}\Gamma (pt+l+r)\sum_{n=0}^{\infty }\frac{(-1)^{n}}{%
n!}\frac{\Gamma (\beta _{1}r+\beta _{2}l+1)}{\Gamma (\beta _{1}r+\beta
_{2}l+1-n)} \\
&=&\frac{1}{\Gamma (pt)}\sum_{l=0}^{\infty }\,\frac{(-a_{2}\lambda ^{\beta
_{2}})^{l}}{\alpha ^{l}l!}\sum_{r=0}^{\infty }\frac{(-a_{1}\lambda ^{\beta
_{1}})^{r}}{r!\alpha ^{r}}\Gamma (pt+l+r)\sum_{n=0}^{\infty }(-1)^{n}\binom{%
\beta _{1}r+\beta _{2}l}{n} \\
&=&\frac{1}{\Gamma (pt)}\sum_{l=0}^{\infty }\,\frac{(-a_{2}\lambda ^{\beta
_{2}})^{l}}{\alpha ^{l}l!}\sum_{r=0}^{\infty }\frac{(-a_{1}\lambda ^{\beta
_{1}})^{r}}{r!\alpha ^{r}}\Gamma (pt+l+r)(1-1)^{\beta _{1}r+\beta _{2}l}=%
\frac{\Gamma (pt)}{\Gamma (pt)}=1.
\end{eqnarray*}
\end{proof}

It is easy to check that , for $a_{1}=1,$ $a_{2}=0$\ and $\beta _{1}=\beta $%
, formula (\ref{dist5}) reduces to (\ref{dis}).

\subsection{Space-fractional ``Polya-type" process}

Another generalization of the SFNB process can be obtained by considering
the following process%
\begin{equation}
\overline{W}_{\beta }(t,u)=\overline{N}_{\beta }(u,Y^{d}(t)).  \label{np}
\end{equation}%
Here $\left\{ \overline{N}_{\beta }(\cdot ,Y^{d}(t))\right\} _{t\geq 0}$ is
the space-fractional Poisson process with random parameter represented by
the process $Y^{d}(t),$ where$\ d$ is a positive constant and $\left\{
Y(t)\right\} _{t\geq 0}$ is an independent gamma subordinator with
parameters $\frac{1}{\lambda }$ and $pt$. Its distribution can be written as
follows:%
\begin{eqnarray*}
\mathbb{P}\left\{ \overline{W}_{\beta }(t,u)=n\right\} &=&\overline{\eta }%
_{\beta }(n|u,\lambda ,pt,d) \\
&=&\int_{0}^{+\infty }\mathbb{P}\left\{ \overline{N}_{\beta
}(u,y^{d})=n\right\} \frac{1}{\lambda ^{pt}\Gamma (pt)}y^{pt-1}e^{-\frac{1}{%
\lambda }y}dy \\
&=&\frac{1}{\lambda ^{pt}\Gamma (pt)}\frac{(-1)^{n}}{n!}\sum_{k=0}^{\infty }%
\frac{(-u)^{k}}{k!}\frac{\Gamma (\beta k+1)}{\Gamma (\beta k+1-n)}%
\int_{0}^{+\infty }y^{pt+\beta dk-1}e^{-\frac{1}{\lambda }y}dy \\
&=&\frac{1}{\Gamma (pt)}\frac{(-1)^{n}}{n!}\sum_{k=0}^{\infty }\frac{%
(-u\lambda ^{\beta d})^{k}}{k!}\frac{\Gamma (\beta k+1)\Gamma (pt+\beta dk)}{%
\Gamma (\beta k+1-n)} \\
&=&\frac{1}{\Gamma (pt)}\frac{(-1)^{n}}{n!}\,_{2}\Psi _{1}\left[ \left.
-u\lambda ^{\beta d}\right\vert _{(1-n,\beta )}^{(1,\beta )\;(pt,\beta d)}%
\right] .
\end{eqnarray*}%
One can check that the SFNB process can be recovered from (\ref{np}) by
setting $d=1/\beta $ and $u=1/\alpha $; in fact we have that%
\begin{eqnarray*}
\mathbb{P}\left\{ \overline{W}_{\beta }(t,1/\alpha )=n\right\} &=&\frac{%
(-1)^{n}}{n!}\frac{1}{\Gamma (pt)}\,_{2}\Psi _{1}\left[ \left. -\frac{%
\lambda }{\alpha }\right\vert _{(1-n,\beta )}^{(1,\beta )\;(pt,1)}\right] \\
&=&\overline{\eta }_{\beta }(n|\frac{1}{\alpha },\lambda ,pt,\frac{1}{\beta }%
) \\
&=&\overline{\delta }_{\beta }(n|\alpha ,pt,\lambda ^{1/\beta }).
\end{eqnarray*}%
The one-dimensional distribution of the space-fractional Polya-type process $%
\bar{\eta}_{\beta }(n|u,\lambda ,pt,d)$ satisfies the following pde:
\begin{eqnarray*}
\frac{1}{p}\partial _{t}^{\nu }\bar{\eta}_{\beta }\left( n|u,\lambda
,pt,d\right) &=&\partial _{t}^{\nu -1}\left[ \log (\alpha )-\psi (pt)\right]
\bar{\eta}_{\beta }\left( n|u,\lambda ,pt,d\right) \\
&&+\int_{0}^{\infty }(\log y)\bar{p}_{\beta }(n|u,y^{d})\partial _{t}^{\nu
-1}g(y|\frac{1}{\lambda },pt)dy,
\end{eqnarray*}%
which follows using
\begin{equation*}
\overline{\eta }_{\beta }(n|u,\lambda ,pt,d)=\int_{0}^{\infty }\overline{p}%
_{\beta }(n|u,y^{d})g(y|\frac{1}{\lambda },pt)dy
\end{equation*}%
and the fractional pde satisfied by $g(y|\frac{1}{\lambda },pt)$ given in
Lemma \ref{lem4}.

\section{The multivariate case}

We study now a multivariate version of the SFNB process introduced in
Definition 1, by considering a $d$-dimensional space-fractional Poisson
process (as defined in \cite{BEGM}) and subordinating all its components by
the same gamma subordinator.

\begin{definition}
Let $\left\{ \left( \overline{N}_{\beta }^{1}(t,\lambda _{1}),...,\overline{N%
}_{\beta }^{d}(t,\lambda _{d})\right) \right\} _{t\geq 0}$ be the $d$%
-dimensional SFPP with parameters $\lambda _{j}>0$, $j=1,...,d$, and let $%
\left\{ Y(t)\right\} _{t\geq 0}$ be a gamma subordinator, independent from $%
\overline{N}_{\beta }^{j}(t,\lambda _{j})$, for any $j.$ Then the process
defined by
\begin{equation}
\left( \overline{Q}_{\beta }^{1}(t,\lambda _{1}),...,\overline{Q}_{\beta
}^{d}(t,\lambda _{d})\right) =\left( \overline{N}_{\beta }^{1}(Y (t),\lambda
_{1}),...,\overline{N}_{\beta }^{d}(Y(t),\lambda _{d})\right) ,\qquad t\geq
0,  \label{mu}
\end{equation}%
is called multivariate SFNB process.
\end{definition}

We extend now the results of Theorem 5 to the $d$-dimensional case. We take,
for simplicity, $p=1$ so that $Y(t) \sim G(\alpha, t)$, henceforth.

\begin{theorem}
Let $\underline{n}=(n_{1},...,n_{d})$, for $n_{i}\in \mathbb{Z}^{+}$ and $%
\underline{\lambda }=(\lambda _{1},...,\lambda _{d}),$ $\lambda _{i}>0$,
such that $\sum_{j=1}^{d}\lambda _{j}<\alpha ^{1/\beta}$. Then, for $\text{ }%
t\geq 0 $, the one-dimensional distribution of the process (\ref{mu}) is
given by%
\begin{eqnarray}
\overline{\delta }_{\beta }(\underline{n}|\alpha ,t,\underline{\lambda }) &=&%
\mathbb{P}\left\{ \overline{Q}_{\beta }^{1}(t,\lambda _{1})=n_{1},...,%
\overline{Q}_{\beta }^{d}(t,\lambda _{d})=n_{d}\right\}  \notag \\
&=&\frac{(-1)^{s(\underline{n})}}{\prod\limits_{i=1}^{d}n_{i}!}\frac{1}{%
\Gamma (t)}\frac{\prod\limits_{i=1}^{d}\lambda _{i}^{n_{i}}}{\left( s(%
\underline{\lambda })\right) ^{s(\underline{n})}}\,_{2}\Psi _{1}\left[
\left. -\frac{\left( s(\underline{\lambda })\right) ^{\beta }}{\alpha }%
\right\vert _{(1-s(\underline{n}),\beta )}^{(1,\beta )\;(t,1)}\right],
\label{mu2}
\end{eqnarray}%
where $s(\underline{n})=\sum_{j=1}^{d}n_{j}$ and $s(\underline{\lambda }%
)=\sum_{j=1}^{d}\lambda _{j}$. The distribution (\ref{mu2}) satisfies the
following space-fractional equation%
\begin{equation}
(e^{-\partial _{t}}-1)\overline{\delta }_{\beta }(\underline{n}|\alpha ,t,%
\underline{\lambda })=\frac{s^{\beta }(\underline{\lambda })}{\alpha }\left(
I-\frac{1}{s(\underline{\lambda })}\sum_{j=1}^{d}\lambda _{j}B_{j}\right)
^{\beta }\overline{\delta }_{\beta }(\underline{n}|\alpha ,t,\underline{%
\lambda }),  \label{mu3}
\end{equation}%
with initial condition $\overline{\delta }_{\beta }(\underline{n}|\alpha ,0,%
\underline{\lambda })=1_{\left\{ \underline{n}=\underline{0}\right\} }$ and $%
B_{j}$ is the coordinate-by-coordinate backward operator, that is, $B_{j}f(%
\underline{n})=f(n_{1},...,n_{j}-1,...,n_{d}), 1 \leq j \leq d.$
\end{theorem}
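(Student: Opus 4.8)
The plan is to mirror the proof of Theorem 2.5 (the univariate case), replacing the single backward operator $B$ by the weighted combination $\frac{1}{s(\underline{\lambda})}\sum_{j=1}^{d}\lambda_{j}B_{j}$. First I would write the joint pmf of the $d$-dimensional SFPP $\left(\overline{N}_{\beta}^{1}(z,\lambda_{1}),\dots,\overline{N}_{\beta}^{d}(z,\lambda_{d})\right)$ from \cite{BEGM}; the key structural fact is that this joint pmf depends on $z$ and on $\underline{\lambda}$ only through $s(\underline{\lambda})^{\beta}z$ and the ratios $\lambda_{i}/s(\underline{\lambda})$, which explains the clustering of the indices in (\ref{mu2}). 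Concretely, one has a representation of the form
\begin{equation*}
\mathbb{P}\left\{\overline{N}_{\beta}^{1}(z,\lambda_{1})=n_{1},\dots,\overline{N}_{\beta}^{d}(z,\lambda_{d})=n_{d}\right\}=\frac{(-1)^{s(\underline{n})}}{\prod_{i}n_{i}!}\frac{\prod_{i}\lambda_{i}^{n_{i}}}{s(\underline{\lambda})^{s(\underline{n})}}\sum_{k=0}^{\infty}\frac{(-s(\underline{\lambda})^{\beta}z)^{k}}{k!}\frac{\Gamma(\beta k+1)}{\Gamma(\beta k+1-s(\underline{n}))}.
\end{equation*}

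Next I would integrate this against the gamma density $g(z|\alpha,t)=\frac{\alpha^{t}}{\Gamma(t)}z^{t-1}e^{-\alpha z}$, exactly as in the passage from (\ref{neqn2}) to (\ref{cc}): interchange sum and integral (justified on the region $s(\underline{\lambda})<\alpha^{1/\beta}$, i.e. $|s(\underline{\lambda})^{\beta}/\alpha|<1$), evaluate $\int_{0}^{\infty}z^{k+t-1}e^{-\alpha z}dz=\Gamma(k+t)\alpha^{-(k+t)}$, and collect the resulting series into the generalized Wright function ${}_{2}\Psi_{1}\!\left[-s(\underline{\lambda})^{\beta}/\alpha\,\big|\,^{(1,\beta)\,(t,1)}_{(1-s(\underline{n}),\beta)}\right]$. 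Convergence of this series follows from Theorem 1.5, p.56 of \cite{KIL} with $\Delta=\beta-(\beta+1)=-1$ and $\delta=1$, precisely as in Theorem 2.5. This yields (\ref{mu2}).

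For the governing equation (\ref{mu3}), I would again follow the template of Theorem 2.5: apply the shift operator $e^{-\partial_{t}}$ under the integral sign, use Lemma \ref{ecp} (with $p=1$) to replace $e^{-\partial_{t}}g(z|\alpha,t)$ by $g(z|\alpha,t)+\frac{1}{\alpha}\partial_{z}g(z|\alpha,t)$, integrate by parts in $z$ to move $\partial_{z}$ onto the SFPP pmf, and then invoke the multivariate analogue of (\ref{neqn3}) — namely that the $d$-dimensional SFPP pmf $\overline{p}_{\beta}(\underline{n}|z,\underline{\lambda})$ satisfies $\frac{\partial}{\partial z}\overline{p}_{\beta}(\underline{n}|z,\underline{\lambda})=-s^{\beta}(\underline{\lambda})\bigl(I-\frac{1}{s(\underline{\lambda})}\sum_{j}\lambda_{j}B_{j}\bigr)^{\beta}\overline{p}_{\beta}(\underline{n}|z,\underline{\lambda})$, which is the governing equation of the $d$-dimensional SFPP from \cite{BEGM}. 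The boundary term in the integration by parts vanishes because $\overline{p}_{\beta}(\underline{n}|z,\underline{\lambda})\to$ (finite) as $z\to0$ while $g(z|\alpha,t)\to0$, and $g\to0$ as $z\to\infty$. Finally, the initial condition $\overline{\delta}_{\beta}(\underline{n}|\alpha,0,\underline{\lambda})=1_{\{\underline{n}=\underline{0}\}}$ is checked directly from (\ref{mu2}): for $\underline{n}=\underline{0}$ the sum reduces via identity (\ref{id}) to $(1+s(\underline{\lambda})^{\beta}/\alpha)^{-t}$, which is $1$ at $t=0$, and for $\underline{n}\ne\underline{0}$ the prefactor $\frac{1}{\Gamma(t)}$ forces the value $0$ at $t=0$ (alternatively, the $z$-integral $\int z^{k+t-1}e^{-\alpha z}dz$ structure makes each term vanish).

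The main obstacle I anticipate is not any single computation — all the steps are formally identical to the univariate case — but rather locating and citing the correct two ingredients from \cite{BEGM}: the closed form of the joint pmf of the $d$-dimensional SFPP and its governing difference-differential equation with the operator $\bigl(I-\frac{1}{s(\underline{\lambda})}\sum_{j}\lambda_{j}B_{j}\bigr)^{\beta}$. Everything hinges on the fact that the multivariate SFPP pmf is, up to the explicit multinomial-type factor $\frac{(-1)^{s(\underline{n})}}{\prod_i n_i!}\frac{\prod_i \lambda_i^{n_i}}{s(\underline{\lambda})^{s(\underline{n})}}$, a function of the single variable $s(\underline{\lambda})^{\beta}z$ of exactly the same form as the univariate SFPP pmf with $\lambda$ replaced by $s(\underline{\lambda})$ and $n$ by $s(\underline{n})$; once that reduction is in place, the subordination integral and the operator manipulations go through verbatim. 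A secondary point requiring a word of care is the rigorous justification of the interchange of summation and integration and of the integration by parts, but these are standard given the absolute convergence on $s(\underline{\lambda})<\alpha^{1/\beta}$ and the decay of $g$.
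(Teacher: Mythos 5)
Your derivation of (\ref{mu2}) is essentially the paper's: you use the same closed form of the joint pmf of the $d$-dimensional SFPP from \cite{BEGM} (your Gamma-ratio form equals the paper's $s(\underline{n})!\binom{\beta r}{s(\underline{n})}$ form), integrate against $g(z|\alpha,t)$ term by term, and invoke the same convergence criterion with $\Delta=-1$, $\delta=1$. For the governing equation (\ref{mu3}), however, you take a genuinely different route. The paper does not repeat the univariate subordination argument; instead it passes to probability generating functions: it computes $G_{\beta}(\underline{u}|\alpha,t,\underline{\lambda})=\bigl(1+[\sum_{j}\lambda_{j}(1-u_{j})]^{\beta}/\alpha\bigr)^{-t}$ from eq. (5) of \cite{BEGM} (see (\ref{iii})), expands the operator $\bigl(I-\frac{1}{s(\underline{\lambda})}\sum_{j}\lambda_{j}B_{j}\bigr)^{\beta}$ via a multinomial expansion to show that the r.h.s. of (\ref{mu3}) transforms into $\frac{s^{\beta}(\underline{\lambda})}{\alpha}\bigl(I-\frac{1}{s(\underline{\lambda})}\sum_{j}\lambda_{j}u_{j}\bigr)^{\beta}G_{\beta}$, and checks directly that $(e^{-\partial_{t}}-1)G_{\beta}$ gives the same expression; this needs only the pgf of the multivariate SFPP, not its governing equation and no integration by parts. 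Your route instead mirrors the univariate Theorem: shift operator under the integral, Lemma \ref{ecp}, integration by parts in $z$, and the multivariate analogue of (\ref{neqn3}), $\partial_{z}\overline{p}_{\beta}(\underline{n}|z,\underline{\lambda})=-s^{\beta}(\underline{\lambda})\bigl(I-\frac{1}{s(\underline{\lambda})}\sum_{j}\lambda_{j}B_{j}\bigr)^{\beta}\overline{p}_{\beta}(\underline{n}|z,\underline{\lambda})$. This is valid (that equation follows from the pgf $e^{-z[\sum_{j}\lambda_{j}(1-u_{j})]^{\beta}}$ even if you cannot quote it verbatim from \cite{BEGM}), and it buys uniformity with the univariate proof while working directly at the pmf level; what it costs is the extra ingredient to be cited or derived, plus a small technical caveat you state too casually: for $t\leq 1$ the density $g(z|\alpha,t)\sim \alpha^{t}z^{t-1}/\Gamma(t)$ does not tend to $0$ as $z\to 0$ (it blows up for $t<1$), so the vanishing of the boundary term at $z=0$ when $\underline{n}=\underline{0}$ needs a more careful argument (the same issue is implicit in the paper's univariate proof, but the paper's pgf verification avoids it entirely in the multivariate case). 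A minor point in your favour: you verify the initial condition explicitly, which the paper's multivariate proof leaves unstated.
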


\begin{proof}
Formula (\ref{mu2}) can be derived by applying the joint distribution of the
$d$-dimensional SF Poisson process given in Proposition 3.5 of \cite{BEGM}
(for $\nu =1$):%
\begin{eqnarray*}
\overline{\delta }_{\beta }(\underline{n}|\alpha ,t,\underline{\lambda })
&=&\int_{0}^{+\infty }\mathbb{P}\left\{ \overline{N}_{\beta }^{1}(z,\lambda
_{1})=n_{1},...,\overline{N}_{\beta }^{d}(z,\lambda _{d})=n_{d}\right\}
g(z|\alpha ,t)dz \\
&=&\frac{(-1)^{s(\underline{n})}s(\underline{n})!}{\prod%
\limits_{i=1}^{d}n_{i}!}\frac{\alpha ^{t}}{\Gamma (t)}\frac{%
\prod\limits_{i=1}^{d}\lambda _{i}^{n_{i}}}{(s(\underline{\lambda }))^{s(%
\underline{n})}}\sum_{r=0}^{\infty }\frac{(-s^{\beta }(\underline{\lambda })
)^{r}}{r!}\binom{\beta r}{s(\underline{n})}\int_{0}^{+\infty
}z^{r+t-1}e^{-\alpha z}dz \\
&=&\frac{(-1)^{s(\underline{n})}s(\underline{n})!}{\prod%
\limits_{i=1}^{d}n_{i}!}\frac{1}{\Gamma (t)}\frac{\prod\limits_{i=1}^{d}%
\lambda _{i}^{n_{i}}}{(s(\underline{\lambda }))^{s(\underline{n})}}%
\sum_{r=0}^{\infty }\frac{(-s^{\beta }(\underline{\lambda })/\alpha )^{r}}{r!%
}\binom{\beta r}{s(\underline{n})}\Gamma (r+t) \\
&=&\frac{(-1)^{s(\underline{n})}}{\prod\limits_{i=1}^{d}n_{i}!}\frac{1}{%
\Gamma (t)}\frac{\prod\limits_{i=1}^{d}\lambda _{i}^{n_{i}}}{(s(\underline{%
\lambda }))^{s(\underline{n})}}\sum_{r=0}^{\infty }\frac{(-s^{\beta }(%
\underline{\lambda })/\alpha )^{r}}{r!}\frac{\Gamma \left( \beta r+1\right)
\Gamma (r+t)}{\Gamma \left( \beta r-s(\underline{n})+1\right) },
\end{eqnarray*}%
which coincides with (\ref{mu2}). The series converges absolutely for $%
s^{\beta }(\underline{\lambda })<\alpha ,$ by Theorem 1.5, p.56 in \cite{KIL}
with $\Delta =\beta -(\beta +1)=-1,$\ $\delta =|\beta |^{-\beta }|\beta
|^{\beta }=1.$

The equation (\ref{mu3}) can be checked by means of the probability
generating function%
\begin{eqnarray}
G_{\beta }(\underline{u}|\alpha ,t,\underline{\lambda }):=
&&\sum_{n_{1},...,n_{d}\geq 0}u_{1}^{n_{1}}\cdot \cdot \cdot u_{d}^{n_{d}}%
\overline{\delta }_{\beta }(\underline{n}|\alpha ,t,\underline{\lambda })
\label{iii} \\
&=&\int_{0}^{+\infty }\sum_{n_{1},...,n_{d}\geq 0}\left(
\prod\limits_{i=1}^{d}u_{i}^{n_{i}}\right) \mathbb{P}\left\{ \overline{N}%
_{\beta }^{1}(z,\lambda _{1})=n_{1},...,\overline{N}_{\beta }^{d}(z,\lambda
_{d})=n_{d}\right\} g(z|\alpha ,t)dz  \notag \\
&=&[\text{by eq. (5) in \cite{BEGM}, for }\nu =1]  \notag \\
&=&\int_{0}^{+\infty }e^{-z\left[ \sum_{j=1}^{d}\lambda _{j}(1-u_{j})\right]
^{\beta }}g(z|\alpha ,t)dz=\left( 1+\frac{\left[ \sum_{j=1}^{d}\lambda
_{j}(1-u_{j})\right] ^{\beta }}{\alpha }\right) ^{-t}.  \notag
\end{eqnarray}%
Let now $S_{r}=\left\{ (l_{1},...,l_{d}):l_{j}\geq
0,\sum_{j=1}^{d}l_{j}=r\right\} $. We multiply equation (\ref{mu3}) by $%
\prod\limits_{i=1}^{d}u_{i}^{n_{i}}$ and sum over $n_{1},...,n_{d}$, so that
we get form the r.h.s.%
\begin{eqnarray}
&&\frac{s^{\beta }(\underline{\lambda })}{\alpha }\sum_{n_{1},...,n_{d}\geq
0}\left( \prod\limits_{i=1}^{d}u_{i}^{n_{i}}\right) \left( I-\frac{1}{s(%
\underline{\lambda })}\sum_{j=1}^{d}\lambda _{j}B_{j}\right) ^{\beta }%
\overline{\delta }_{\beta }(\underline{n}|\alpha ,t,\underline{\lambda })
\label{gp} \\
&=&\frac{s^{\beta }(\underline{\lambda })}{\alpha }\sum_{n_{1},...,n_{d}\geq
0}\left( \prod\limits_{i=1}^{d}u_{i}^{n_{i}}\right) \sum_{r=0}^{\infty }%
\binom{\beta }{r}\frac{(-1)^{r}}{(s(\underline{\lambda }))^{r}}\left(
\sum_{j=1}^{d}\lambda _{j}B_{j}\right) ^{r}\overline{\delta }_{\beta }(%
\underline{n}|\alpha ,t,\underline{\lambda })  \notag \\
&=&\frac{s^{\beta }(\underline{\lambda })}{\alpha }\sum_{n_{1},...,n_{d}\geq
0}\left( \prod\limits_{i=1}^{d}u_{i}^{n_{i}}\right) \sum_{r=0}^{\infty }%
\binom{\beta }{r}\frac{(-1)^{r}}{(s(\underline{\lambda }))^{r}}%
\sum_{l_{1},...,l_{d}\in S_{r}}\binom{r}{l_{1},...,l_{d}}\prod%
\limits_{i=1}^{d}\lambda _{i}^{l_{i}}B_{i}^{l_{i}}\overline{\delta }_{\beta
}(\underline{n}|\alpha ,t,\underline{\lambda })  \notag \\
&=&\frac{s^{\beta }(\underline{\lambda })}{\alpha }\sum_{r=0}^{\infty }%
\binom{\beta }{r}\frac{(-1)^{r}}{(s(\underline{\lambda }))^{r}}%
\sum_{l_{1},...,l_{d}\in S_{r}}\binom{r}{l_{1},...,l_{d}}%
\sum_{n_{1},...,n_{d}\geq 0}\prod\limits_{i=1}^{d}u_{i}^{n_{i}}\lambda
_{i}^{l_{i}}B_{i}^{l_{i}}\overline{\delta }_{\beta }(\underline{n}|\alpha ,t,%
\underline{\lambda })  \notag \\
&=&\frac{s^{\beta }(\underline{\lambda })}{\alpha }\sum_{r=0}^{\infty }%
\binom{\beta }{r}\frac{(-1)^{r}}{(s(\underline{\lambda }))^{r}}%
\sum_{l_{1},...,l_{d}\in S_{r}}\binom{r}{l_{1},...,l_{d}}\left(
\prod\limits_{i=1}^{d}u_{i}^{l_{i}}\lambda _{i}^{l_{i}}\right)
\sum_{n_{1},...,n_{d}\geq 0}\prod\limits_{i=1}^{d}u_{i}^{n_{i}}\overline{%
\delta }_{\beta }(\underline{n}|\alpha ,t,\underline{\lambda })  \notag \\
&=&\frac{s^{\beta }(\underline{\lambda })}{\alpha }\left( I-\frac{1}{s(%
\underline{\lambda })}\sum_{j=1}^{d}\lambda _{j}u_{j}\right) ^{\beta
}G_{\beta }(\underline{u}|\alpha ,t,\underline{\lambda }).  \notag
\end{eqnarray}%
From the l.h.s. of (\ref{mu3}) we have instead that%
\begin{eqnarray*}
&&(e^{-\partial _{t}}-1)G_{\beta }(\underline{u}|\alpha ,t,\underline{%
\lambda }) \\
&=&G_{\beta }(\underline{u}|\alpha ,t-1,\underline{\lambda })-G_{\beta }(%
\underline{u}|\alpha ,t,\underline{\lambda }) \\
&=&\left( 1+\frac{\left[ \sum_{j=1}^{d}\lambda _{j}(1-u_{j})\right] ^{\beta }%
}{\alpha }\right) ^{-(t-1)}-\left( 1+\frac{\left[ \sum_{j=1}^{d}\lambda
_{j}(1-u_{j})\right] ^{\beta }}{\alpha }\right) ^{-t} \\
&=&G_{\beta }(\underline{u}|\alpha ,t,\underline{\lambda })\left[ 1+\frac{%
\left[ \sum_{j=1}^{d}\lambda _{j}(1-u_{j})\right] ^{\beta }}{\alpha }-1%
\right] ,
\end{eqnarray*}%
which coincides with (\ref{gp}).
\end{proof}

For $\beta =1$, we obtain the joint distribution of the multivariate NB
process, which is therefore defined as%
\begin{equation}
\left( N^{1}(Y(t),\lambda _{1}),...,N^{d}(Y(t),\lambda _{d})\right) ,\qquad
t\geq 0,  \label{ggg}
\end{equation}%
where, in this case, the $d$ components of the leading vector $\left(
N^{1}(t,\lambda _{1}),...,N^{d}(t,\lambda _{d})\right) $ are mutually
independent, while the common subordination by the gamma process introduces
correlations among the components of the process.

\begin{corollary}
The joint distribution of the multivariate NB process defined in (\ref{ggg})
is given by%
\begin{eqnarray*}
\overline{\delta }_{1}(\underline{n}|\alpha ,t,\underline{\lambda }) &=&%
\mathbb{P}\left\{ N^{1}(Y(t),\lambda _{1})=n_{1},...,N^{d}(Y(t),\lambda
_{d})=n_{d}\right\} \\
&=&\frac{\alpha ^{t}s(\underline{n})!}{\left( \alpha +s(\underline{\lambda }%
)\right) ^{s(\underline{n})+t}}\binom{s(\underline{n})+t-1}{t-1}%
\prod\limits_{i=1}^{d}\frac{\lambda _{i}^{n_{i}}}{n_{i}!}
\end{eqnarray*}%
and satisfies the p.d.e.%
\begin{equation}
(e^{-\partial _{t}}-1)\overline{\delta }_{1}(n_{1},...,n_{d}|\alpha ,t,%
\underline{\lambda })=\frac{s(\underline{\lambda })}{\alpha }\overline{%
\delta }_{1}(\underline{n}|\alpha ,t,\underline{\lambda })-\frac{1}{\alpha }%
\sum_{j=1}^{d}\lambda _{j}\overline{\delta }_{1}(n_{1},..,n_{i-1},.,n_{d}|%
\alpha ,t,\underline{\lambda }),  \label{mu4}
\end{equation}%
with initial condition $\overline{\delta }_{1}(\underline{n}|\alpha ,0,%
\underline{\lambda })=1_{\left\{ \underline{n}=\underline{0}\right\} }.$
\end{corollary}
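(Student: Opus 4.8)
The plan is to derive both the explicit joint distribution and the governing p.d.e.\ as the $\beta=1$ specialization of Theorem~7, exactly as Remark~2(ii) does for the one-dimensional case. First I would set $\beta=1$ in formula (\ref{mu2}). The generalized Wright function then becomes
\[
{}_{2}\Psi_{1}\left[\left.-\frac{s(\underline{\lambda})}{\alpha}\right\vert_{(1-s(\underline{n}),1)}^{(1,1)\;(t,1)}\right]
=\sum_{r=0}^{\infty}\frac{(-s(\underline{\lambda})/\alpha)^{r}}{r!}\frac{\Gamma(r+1)\Gamma(r+t)}{\Gamma(r-s(\underline{n})+1)},
\]
and since $\Gamma(r-s(\underline{n})+1)=\infty$ for $r<s(\underline{n})$, the sum starts at $r=s(\underline{n})$. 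Substituting $r=l+s(\underline{n})$ turns this into $s(\underline{n})!\,\Gamma(s(\underline{n})+t)(-s(\underline{\lambda})/\alpha)^{s(\underline{n})}\sum_{l\ge0}(-s(\underline{\lambda})/\alpha)^{l}\binom{l+s(\underline{n})+t-1}{l}$, and the combinatorial identity (\ref{id}) with $\xi=s(\underline{n})+t$ collapses the $l$-series to $(1+s(\underline{\lambda})/\alpha)^{-(s(\underline{n})+t)}$. Folding in the prefactors $\frac{(-1)^{s(\underline{n})}}{\prod n_i!}\frac{1}{\Gamma(t)}\frac{\prod\lambda_i^{n_i}}{(s(\underline{\lambda}))^{s(\underline{n})}}$ and writing $\Gamma(s(\underline{n})+t)/\Gamma(t)=\binom{s(\underline{n})+t-1}{t-1}(s(\underline{n})!)^{-1}\cdots$ — more precisely $\Gamma(s(\underline{n})+t)=\Gamma(t)\binom{s(\underline{n})+t-1}{s(\underline{n})}s(\underline{n})!$ — yields, after the signs $(-1)^{s(\underline{n})}$ cancel, the stated expression $\frac{\alpha^{t}s(\underline{n})!}{(\alpha+s(\underline{\lambda}))^{s(\underline{n})+t}}\binom{s(\underline{n})+t-1}{t-1}\prod_i\frac{\lambda_i^{n_i}}{n_i!}$.

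For the p.d.e., I would specialize (\ref{mu3}) to $\beta=1$: the operator $\bigl(I-\tfrac{1}{s(\underline{\lambda})}\sum_j\lambda_j B_j\bigr)^{1}$ is just $I-\tfrac{1}{s(\underline{\lambda})}\sum_j\lambda_j B_j$, and multiplying through by $\tfrac{s(\underline{\lambda})}{\alpha}$ gives the right-hand side $\tfrac{s(\underline{\lambda})}{\alpha}\overline{\delta}_1(\underline{n}|\alpha,t,\underline{\lambda})-\tfrac{1}{\alpha}\sum_{j=1}^{d}\lambda_j B_j\overline{\delta}_1(\underline{n}|\alpha,t,\underline{\lambda})$, which is precisely (\ref{mu4}) once $B_j$ is written out coordinatewise. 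The left-hand side $(e^{-\partial_t}-1)\overline{\delta}_1$ is unchanged. Equivalently, one can verify (\ref{mu4}) directly from the explicit formula by checking that $G_\beta(\underline{u}|\alpha,t,\underline{\lambda})$ in (\ref{iii}) with $\beta=1$ equals $(1+\alpha^{-1}\sum_j\lambda_j(1-u_j))^{-t}$ and applying $e^{-\partial_t}$; this is the same computation as in the last display of the proof of Theorem~7 with $\beta=1$. The initial condition $\overline{\delta}_1(\underline{n}|\alpha,0,\underline{\lambda})=1_{\{\underline{n}=\underline{0}\}}$ is inherited from Theorem~7 (or seen directly: at $t=0$ the factor $\alpha^{t}(\alpha+s(\underline{\lambda}))^{-t}=1$ and $\binom{s(\underline{n})-1}{-1}$ forces $s(\underline{n})=0$).

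The main obstacle is purely bookkeeping: matching the Gamma-function prefactors so that $\Gamma(s(\underline{n})+t)/\Gamma(t)$ reorganizes cleanly into the product $s(\underline{n})!\binom{s(\underline{n})+t-1}{t-1}$ appearing in the statement, and confirming that all factors of $(-1)^{s(\underline{n})}$ and powers of $s(\underline{\lambda})$ cancel as claimed. There is no genuine analytic difficulty — convergence is already guaranteed by the hypothesis $s(\underline{\lambda})<\alpha^{1/\beta}$ which at $\beta=1$ reads $s(\underline{\lambda})<\alpha$, matching the radius of convergence of the geometric-type series invoked through (\ref{id}). I would present the argument as a short computation deriving the closed form from (\ref{mu2}), then a one-line specialization of (\ref{mu3}) for the p.d.e.
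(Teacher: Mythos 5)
Your proposal is correct and takes essentially the same route as the paper: the closed form is obtained by setting $\beta=1$ in (\ref{mu2}), shifting the summation index, and applying the identity (\ref{id}), while the p.d.e.\ (\ref{mu4}) is the immediate $\beta=1$ specialization of (\ref{mu3}), with the initial condition inherited from the preceding theorem. The only (bookkeeping) slip is that after substituting $r=l+s(\underline{n})$ the series equals $(-s(\underline{\lambda})/\alpha)^{s(\underline{n})}\,\Gamma(s(\underline{n})+t)\sum_{l\geq 0}(-s(\underline{\lambda})/\alpha)^{l}\binom{l+s(\underline{n})+t-1}{l}$, without the extra factor $s(\underline{n})!$ appearing in your display (since $\Gamma(r+1)/r!=1$); the $s(\underline{n})!$ in the final formula arises solely from $\Gamma(s(\underline{n})+t)/\Gamma(t)=s(\underline{n})!\,\binom{s(\underline{n})+t-1}{t-1}$, exactly as in the paper's computation.
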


\begin{proof}
From (\ref{mu2}) we get%
\begin{eqnarray*}
&&\mathbb{P}\left\{ N^{1}(Y(t),\lambda _{1})=n_{1},...,N^{d}(Y(t),\lambda
_{d})=n_{d}\right\} \\
&=&\frac{(-1)^{s(\underline{n})}}{\prod\limits_{i=1}^{d}n_{i}!}\frac{1}{%
\Gamma (t)}\frac{\prod\limits_{i=1}^{d}\lambda _{i}^{n_{i}}}{(s(\underline{%
\lambda }))^{s(\underline{n})}}\sum_{r=s(\underline{n})}^{\infty }\frac{(-s(%
\underline{\lambda })/\alpha )^{r}}{(r-s(\underline{n}))!}\Gamma (r+t) \\
&=&\frac{1}{\Gamma (t)\alpha ^{s(\underline{n})}}\prod\limits_{i=1}^{d}\frac{%
\lambda _{i}^{n_{i}}}{n_{i}!}\sum_{l=0}^{\infty }\frac{(-s(\underline{%
\lambda })/\alpha )^{l}}{l!}\Gamma (l+s(\underline{n})+t) \\
&=&\frac{\Gamma (s(\underline{n})+t)}{\Gamma (t)\alpha ^{s(\underline{n})}}%
\prod\limits_{i=1}^{d}\frac{\lambda _{i}^{n_{i}}}{n_{i}!}\sum_{l=0}^{\infty }%
\binom{l+s(\underline{n})+t-1}{l}(-s(\underline{\lambda })/\alpha )^{l} \\
&=&\binom{s(\underline{n})+t-1}{t-1}\frac{s(\underline{n})!}{\alpha ^{s(%
\underline{n})}}\prod\limits_{i=1}^{d}\frac{\lambda _{i}^{n_{i}}}{n_{i}!}%
\left( 1+\frac{s(\underline{\lambda })}{\alpha }\right) ^{-s(\underline{n}%
)-t},
\end{eqnarray*}%
where, in the last step, we have used again the identity (\ref{id}).
Equation (\ref{mu4}) follows immediately from (\ref{mu3}) for $\beta =1.$%
\newline
\end{proof}

\begin{remark}
In the non-fractional case, we can evaluate the covariance between two
components of the process (\ref{ggg}), by deriving the moments generating
function; we assume for simplicity $\lambda _{j}=\lambda $, for any $%
j=1,...,d,$ and perform calculations similar to those in (\ref{iii}):
\begin{eqnarray}
M(u_{j},u_{l};t)&:=&\mathbb{E}e^{\{u_{j}N^{j}(Y (t))+u_{l}N^{l}(Y (t))\}}
\label{mm} \\
&=&\frac{\alpha ^{t}}{\left[ \alpha +2\lambda -\lambda (e^{u_{j}}+e^{u_{l}})%
\right] ^{t}}.  \notag
\end{eqnarray}%
Thus, for any $j\neq l,$%
\begin{equation*}
\mathbb{C}ov\left[ N^{j}(Y (t)),N^{l}(Y (t))\right] =\frac{\lambda ^{2}t}{%
\alpha ^{2}},
\end{equation*}%
which is obviously positive and linearly increasing with $t.$
\end{remark}

\

On the other hand, in the fractional case, the process has no finite
moments. It is a multivariate L\'{e}vy process and its Laplace exponent is
given by
\begin{equation*}
\psi (u)=\ln \left( 1+\frac{\left[ \sum_{j=1}^{d}\lambda _{j}(1-e^{-u_{j}})%
\right] ^{\beta }}{\alpha }\right) .
\end{equation*}%
Its discrete L\'{e}vy measure can be derived, by applying again Theorem
30.1, p. 197 in \cite{SAT} as follows: let us write $\underline{n}\succ
\underline{0}$ to mean that $n_{j}\geq 0$ for any $j=1,...,d$, but $%
\underline{n}\neq \underline{0}$, then%
\begin{eqnarray*}
\nu _{\beta }(\cdot ) &=&\int_{0}^{+\infty }\sum_{\underline{n}\succ
\underline{0}}\mathbb{P}\left\{ N_{\beta }^{1}(t,\lambda
_{1})=n_{1},...,N_{\beta }^{d}(t,\lambda _{d})=n_{d}\right\} \delta
_{\left\{ \underline{n}\right\} }(\cdot )\mu _{\Gamma }(s)ds \\
&=&\sum_{\underline{n}\succ \underline{0}}\frac{(-1)^{s(\underline{n})}s(%
\underline{n})!}{\prod\limits_{i=1}^{d}n_{i}!}\frac{\prod\limits_{i=1}^{d}%
\lambda _{i}^{n_{i}}}{(s(\underline{\lambda }))^{s(\underline{n})}}%
\sum_{r=0}^{\infty }\frac{(-s^{\beta }(\underline{\lambda })/\alpha )^{r}}{r!%
}\binom{\beta r}{s(\underline{n})}\delta _{\left\{ \underline{n}\right\}
}(\cdot )\int_{0}^{+\infty }s^{r-1}e^{-\alpha s}ds \\
&=&\sum_{\underline{n}\succ \underline{0}}\frac{(-1)^{s(\underline{n})}s(%
\underline{n})!}{\prod\limits_{i=1}^{d}n_{i}!}\frac{\prod\limits_{i=1}^{d}%
\lambda _{i}^{n_{i}}}{(s(\underline{\lambda }))^{s(\underline{n})}}\delta
_{\left\{ \underline{n}\right\} }(\cdot )\sum_{r=1}^{\infty }\frac{%
(-s^{\beta }(\underline{\lambda })/\alpha )^{r}}{r}\binom{\beta r}{s(%
\underline{n})} \\
&=&\sum_{\underline{n}\succ \underline{0}}\frac{(-1)^{s(\underline{n})}}{%
\prod\limits_{i=1}^{d}n_{i}!}\frac{\prod\limits_{i=1}^{d}\lambda _{i}^{n_{i}}%
}{(s(\underline{\lambda }))^{s(\underline{n})}}\delta _{\left\{ \underline{n}%
\right\} }(\cdot )\,_{2}\Psi _{1}\left[ \left. -\frac{\left( s(\underline{%
\lambda })\right) ^{\beta }}{\alpha }\right\vert _{(1-s(\underline{n}),\beta
)}^{(1,\beta )\;(0,1)}\right] .
\end{eqnarray*}%
The series converges absolutely for $s^{\beta }(\underline{\lambda })<\alpha
$. In the special case $\beta =1,$ we get the L\'{e}vy measure of the
multivariate NB process:%
\begin{eqnarray*}
\nu _{1}(\cdot ) &=&\sum_{n_{1},...,n_{d}\geq 1}^{\infty }\frac{(-1)^{s(%
\underline{n})}s(\underline{n})!}{\prod\limits_{i=1}^{d}n_{i}!}\frac{%
\prod\limits_{i=1}^{d}\lambda _{i}^{n_{i}}}{(s(\underline{\lambda }))^{s(%
\underline{n})}}\delta _{\left\{ \underline{n}\right\} }(\cdot )\sum_{r=s(%
\underline{n})}^{\infty }\frac{(-s(\underline{\lambda })/\alpha )^{r}}{r}%
\binom{r}{s(\underline{n})} \\
&=&\sum_{n_{1},...,n_{d}\geq 1}^{\infty }\frac{s(\underline{n})!}{%
\prod\limits_{i=1}^{d}n_{i}!}\frac{\prod\limits_{i=1}^{d}\lambda _{i}^{n_{i}}%
}{\alpha ^{s(\underline{n})}}\delta _{\left\{ \underline{n}\right\} }(\cdot
)\sum_{l=0}^{\infty }\frac{(-s(\underline{\lambda })/\alpha )^{l}}{l+s(%
\underline{n})}\binom{l+s(\underline{n})}{s(\underline{n})}
\end{eqnarray*}
\begin{eqnarray*}
&=&\sum_{n_{1},...,n_{d}\geq 1}^{\infty }\frac{(s(\underline{n})-1)!}{%
\prod\limits_{i=1}^{d}n_{i}!}\frac{\prod\limits_{i=1}^{d}\lambda _{i}^{n_{i}}%
}{\alpha ^{s(\underline{n})}}\delta _{\left\{ \underline{n}\right\} }(\cdot
)\sum_{l=0}^{\infty }(-s(\underline{\lambda })/\alpha )^{l}\binom{l+s(%
\underline{n})-1}{l} \\
&=&\sum_{n_{1},...,n_{d}\geq 1}^{\infty }\frac{(s(\underline{n})-1)!}{%
\prod\limits_{i=1}^{d}n_{i}!}\frac{\prod\limits_{i=1}^{d}\lambda _{i}^{n_{i}}%
}{\left[ \alpha +s(\underline{\lambda })\right] ^{s(\underline{n})}}\delta
_{\left\{ \underline{n}\right\} }(\cdot ),
\end{eqnarray*}%
which, for $d=1$, coincides with the well-known L\'{e}vy measure of the
standard NB process.

\section{A simulation of the SFNB process}
In this section, we discuss an algorithm for the simulation of Poisson process, gamma process,
$\beta$-stable subordinator and finally for the SFNB process.

\subsection{Simulation of the Poisson process}\label{poisson-sec}

\noindent The following algorithm for the simulation of the Poisson process with rate $\lambda$ is used.\\\\
{\bf  Algorithm 1: Simulation of the Poisson process}
\begin{enumerate}[(a)]
    \item Generate $n$ independent $U_i\sim U(0,1)$, $i = 1, 2, \ldots,n$.
    \item  Set $Y_i=-\frac{\log(1-U_i)}{\lambda}$ so that $Y_i\sim $ Exp($\lambda$).
    \item  Set $X_i=\sum_{j=1}^{i}Y_j$.
    \end{enumerate}
Then $X_i$ denotes the time at which $N(t)$ increases by 1 for the
$i$-th time.

\subsection{Simulation of the gamma process}\label{gamma-sec}

The sample paths of the $G(\alpha,pt)$ process in $[0,T]$ is generated using the ``gamma sequential sampling (GSS)'' (see \cite[p. 321]{AVR})
algorithm which is given below:\\
\blockquote{$G(0)=0;$\\
    $h=2^{-k}T;$\\
    For $i=1$ to $2^k$\\
    \hspace*{0.7cm} Generate $Q\sim G(\alpha,ph)\stackrel{d}{=}  G(\mu/\nu,\mu^2h/\nu);$\\
    \hspace*{0.7cm} $G(ih)=G((i-1)h)+Q$;\\
    Next $i~~~~~~~~~~~~$\\}
{\bf Algorithm 2:   Discretized trajectory for gamma process}\\\\
{\it Fix the parameters $\alpha$ and $p$ for gamma process.}
\begin{enumerate}[(a)]
    \item Choose $n$ uniformly spaced time points $t_1,t_2,\ldots,t_n.$ with $h=t_1=t_2-t_1.$
    \item Generate or simulate $n$ independent gamma random variables
    $Q_i\sim G(\alpha,ph).$
    \item The discretized sample path of $Y(t)$ at $t_i$ is $Y(ih)=Y(t_i)=\sum_{j=1}^{i}Q_{j}$ with $Q_0=0.$

\end{enumerate}
\subsection{Simulation of $\beta$-stable subordinator $D_\beta(t),~0<\beta<1$ with support $(0,\infty)$}\label{stable-sec}
\noindent Let $D_\beta(t),~0<\beta<1$, be a $\beta$-stable
subordinator with Laplace transform
\begin{equation}\label{stable-LT}
\mathbb{E}[e^{-sD_\beta(t)}]=e^{-ts^\beta}.
\end{equation}
Since $D_\beta(t)$ is self-similar with index $1/\beta$, we have
\begin{equation}\label{stable-SS2}
D_\beta(t)\stackrel{d}{=}t^{1/\beta}D_\beta(1).
\end{equation}
Let $U$ and $W$ be independent random variables, where $U\sim
U[0,\pi]$, and $W\sim$ Exp(1). It is shown in Corollary 4.1 of
\cite{KAN} that
\begin{equation}\label{stable-kanter}
D_\beta(1)=\left(\frac{a(U)}{W}\right)^{(1-\beta)/\beta},
\end{equation}
where
\begin{equation}\label{stable-kanter2}
a(u)=\left(\frac{\sin\beta u}{\sin
u}\right)^{1/(1-\beta)}\left(\frac{\sin(1-\beta)u}{\sin\beta
u}\right).
\end{equation}
Let $t_1<t_2<\cdots<t_n$ be the time points. Then the increments
\begin{equation}\label{stable-inc}
D_\beta(t_{i})-D_\beta(t_{i-1})\stackrel{d}{=}
(t_i-t_{i-1})^{1/\beta}D_\beta(1).
\end{equation}
From \eqref{stable-kanter} and \eqref{stable-kanter2}, we get
\begin{align}
D_\beta(t_{i})-D_\beta(t_{i-1})&\stackrel{d}{=}
(t_i-t_{i-1})^{1/\beta}\frac{(\sin\beta
U)(\sin(1-\beta)U)^{(1-\beta)/\beta}}{(\sin
U)^{1/\beta}W^{(1-\beta)/\beta}}.
\end{align}\\
\noindent{\bf Algorithm 3:   Discretized trajectory for
$\beta$-stable
    subordinator}\\
\begin{enumerate}[(a)]
    \item Choose $n$ time points $t_1,t_2,\ldots,t_n.$
    \item Simulate $U_1,\ldots,U_n$, where $U_i\sim U[0,\pi]$, and $W_1,\ldots,W_n$, where $W_i\sim $Exp(1).
    \item Compute the increments
    \begin{equation*}
\Delta
D_\beta^{(i)}=D_\beta(t_i)-D_\beta(t_{i-1})=(t_i-t_{i-1})^{1/\beta}\frac{(\sin\beta
U_i)(\sin(1-\beta)U_i)^{(1-\beta)/\beta}}{(\sin
U_i)^{1/\beta}W_i^{(1-\beta)/\beta}},
    \end{equation*}
    for $1\leq i\leq n$ with $D_\beta(0)=0.$
    \item The discretized sample path of $D_\beta(t)$ at $t_i$ is $D_\beta(t_i)=\sum_{j=1}^{i}\Delta D_\beta^{(j)}.$ \\
    \end{enumerate}

\noindent The simulation of the SFNB process is done using the following algorithm.\\

\noindent {\bf Algorithm 4:  Simulation of the SFNB process}\\\\
{\it Choose first the parameters $\alpha$ and $p$ of the gamma subordinator, the rate parameter $\lambda>0$ for the Poisson process and $\beta,~0<\beta<1$ for the $\beta$-stable subordinator}.\\\\
{\it Step 1:} Fix the time $T$ for the time interval $[0,T]$. Choose time points $0<t_1<\ldots<t_n<T,$ which are uniformly spaced with $h=t_2-t_1$.\\\\
{\it Step 2:} Simulate the values $Y(t_i)$ of the gamma process for $0<t_1<\ldots<t_n<T,$ using Algorithm 2.\\\\
{\it Step 3:} Using the values $Y(t_i)$ generated in {\it Step 2}, as time points, simulate the trajectory of the $\beta$-stable subordinator $D_\beta(Y(t_i))$ using Algorithm 3.\\\\
{\it Step 4:} Using $D_\beta(Y(t_i))$ obtained in {\it Step 3}, as the time points, compute $N(D_\beta(Y(t_i)))$ of the Poisson process using Algorithm 1.\\\\
The above algorithm is implemented in Matlab 2015b to obtain the
sample paths (Figures 3 and 4) of the SFNB process for certain
chosen values of the parameter.
\begin{figure}[!hb]
    \caption{Sample paths for the SFNB process for $\beta=0.95,\alpha=1.2,p=4$ and $\lambda=2.$}
    \includegraphics[width=0.8\textwidth]{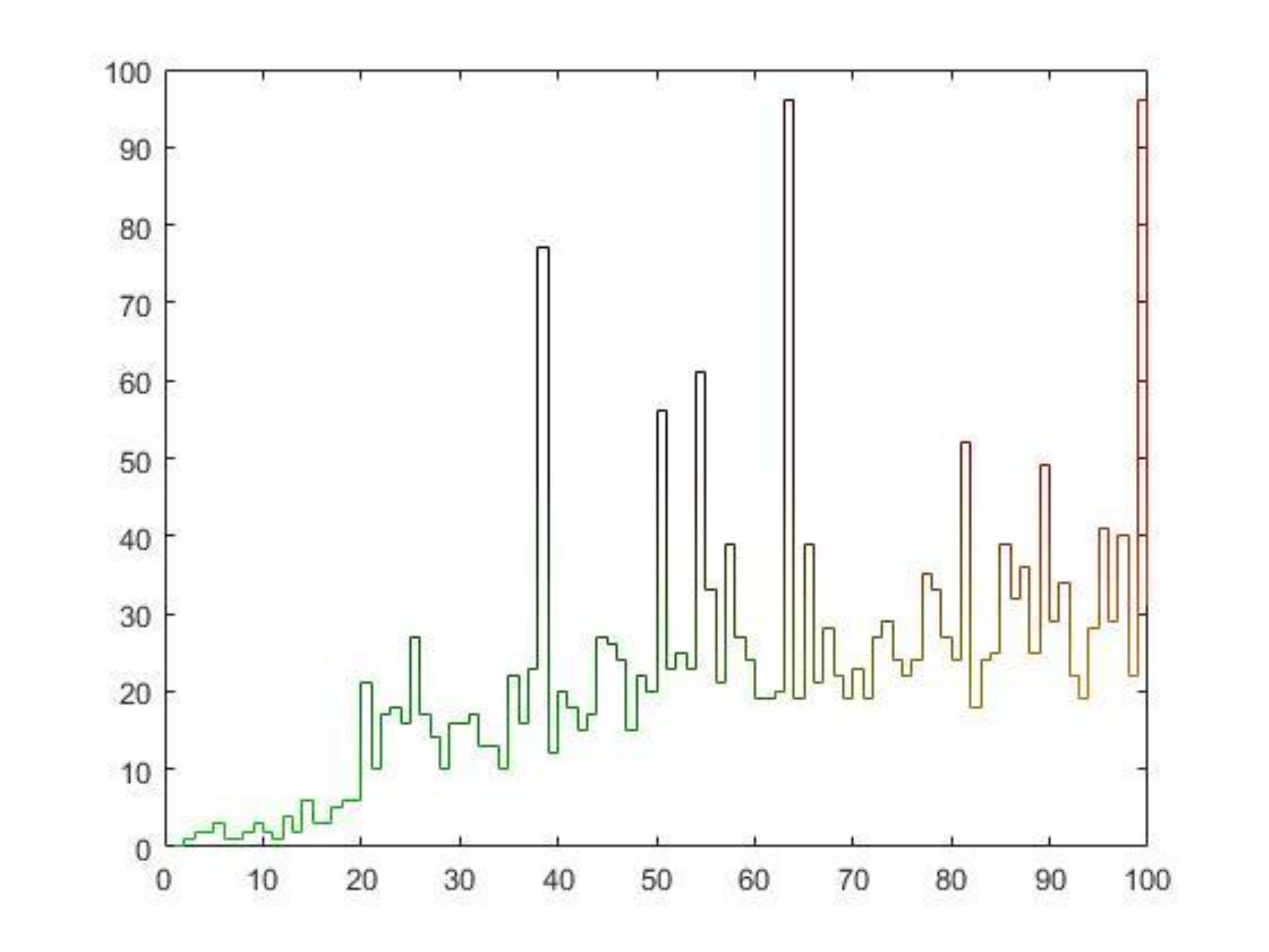}
\end{figure}
\clearpage
\begin{figure}[ht]
    \caption{Sample paths for the SFNB process for $\beta=0.8,\alpha=2,p=1.5$ and $\lambda=0.6.$}
    \centering
    \includegraphics[width=0.8\textwidth]{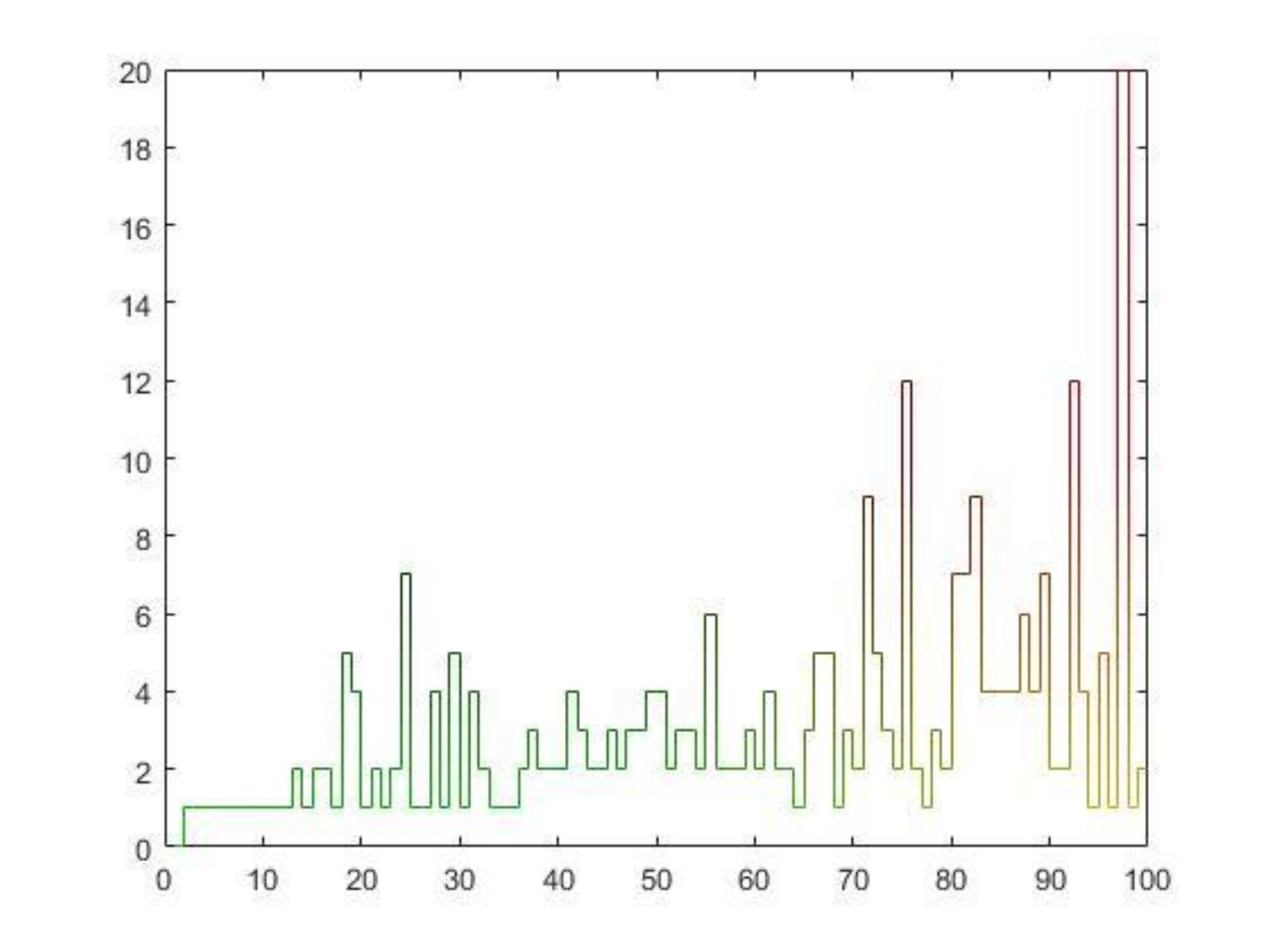}
\end{figure}

\section*{Acknowledgement}

The authors thank Mr. A. Maheshwari for his simulations and
computational help. They are also grateful to Prof. C. Macci for
his careful reading of the paper and suggesting some useful comments.


\begin{thebibliography}{99}

\bibitem{ANS} \noindent Anscombe F.J. (1949), The
statistical analysis of insect counts based on the negative
binomial distribution, \emph{Biometrics}, 5, 165-173.

\bibitem{AVR} \noindent Avramidis A., Lecuyer P., Tremblay P.A. (2003),
Efficient simulation of gamma and variance-gamma processes,
\emph{Simulation Conference, 2003. Proceedings of the 2003
Winter}, 1, 319-326.

\bibitem{BEG1} \noindent Beghin L. (2012). Random-time processes governed by
differential equations of fractional distributed order, \emph{Chaos,
Solitons \& Fractals} 45, 1314--1327.

\bibitem{BEG2} \noindent Beghin L. (2014). Geometric stable processes and
related fractional differential equations, \emph{Electron. Commun. Probab.,}
19, n.13, 1--14.

\bibitem{BEG3} \noindent Beghin L. (2015). Fractional gamma process and
fractional gamma-subordinated processes, \emph{Stoch. Anal. Appl.}, to
appear.

\bibitem{BEGMA} \noindent Beghin L., Macci C. (2014). Fractional discrete
processes: compound and mixed Poisson representations, \emph{J. Appl. Probab.%
} 51.1, 9--36.

\bibitem{BEGM} \noindent Beghin L., Macci C. (2015). Multivariate fractional
Poisson processes and compound sums, \emph{ArXiv: 1507.05805v1}.

\bibitem{BLI} \noindent Bliss C.I., Fisher R.A. (1953), Fitting the negative binomial
distribution to biological data, \emph{Biometrics}, 9 (2), 176-200.

\bibitem{CHE} \noindent Chechkin AV, Gorenflo R, Sokolov IM. (2002).
Retarding subdiffusion and accelerating superdiffusion governed by
distributed-order fractional diffusion equations. \emph{Phys. Rev. E}\ 66:
046129/1--9/6.

\bibitem{KAN} \noindent Kanter M. (1975), Stable
densities under change of scale and total variation inequalities,
\emph{Ann. Probab.}, 34, 697-707.

\bibitem{KIL} \noindent Kilbas A.A., Srivastava H.M., Trujillo J.J. (2006),
\emph{Theory and Applications of Fractional Differential Equations}, vol.
204 of North-Holland Mathematics Studies, Elsevier Science B.V., Amsterdam.

\bibitem{KOZ} \noindent Kozubowski, T.J., Podg\'{o}rski, K. (2009),
Distributional properties of the negative binomial L\'{e}vy process. \emph{%
Probab. Math. Statist.} 29, 43--71.

\bibitem{LAS} \noindent Laskin, N. (2003), Fractional Poisson process. \emph{%
Commun. Nonlinear Sci. Numer. Simul.} 8.3-4, Chaotic transport and
complexity in classical and quantum dynamics, 201--213.

\bibitem{MAI1} \noindent Mainardi F, Mura A, Gorenflo R, Stojanovic M.
(2007), The two forms of fractional relaxation with distributed order. \emph{%
J. Vib. Control} (9-10), 1249--1268.

\bibitem{MAI2} \noindent Mainardi F, Pagnini G. (2007), The role of the
Fox--Wright functions in fractional sub-diffusion of distributed order.
\emph{J. Comput. Appl. Math.}; 207, 245--257.

\bibitem{MEE} \noindent Meerschaert M.M., Nane E., Vellaisamy P. (2011), The
fractional Poisson process and the inverse stable subordinator. \emph{%
Electron. J. Probab.} 16, no. 59, 1600--1620.

\bibitem{ORS} \noindent Orsingher E., Polito F. (2012), The space-fractional
Poisson process, \emph{Statist. Probab.Letters, }vol. 82, n. 4, 852--858.

\bibitem{SAT} \noindent Sato K.I. (1999), \emph{L\'{e}vy Processes and
Infinitely Divisible Distributions,} Cambridge Studies in Adv. Math.,
Cambridge Univ. Press.

\bibitem{SRI} \noindent Srivastava R. (2013), Some generalizations of
Pochhammer's symbol and their associated families of hypergeometric
functions and hypergeometric polynomials, \emph{Appl. Math. Inf. Sci}., 7,
(6), 2195-2206.

\bibitem{UV} \noindent Upadhye N.S., Vellaisamy P. (2014), Compound Poisson
approximation to convolutions of compound negative binomial variables, \emph{%
Method. Comp. Appl. Probab.,} 16 (4), 951-968.

\bibitem{VAL} \noindent Valero J., Ginebra J., P\'{e}rez-Casany M. (2012),
Extended truncated Tweedie-Poisson model, \emph{Method. Comp. Appl. Probab.,}
14 (3), 811-829.

\bibitem{VEL} \noindent Vellaisamy P., Maheshwari A. (2015), Fractional
negative binomial and Polya processes, \emph{Probab. Math. Statist.}, to
appear.
\end{thebibliography}
\end{document}